\theoremstyle{plain}
\newtheorem{theorem}{Theorem}[section]
\newtheorem{lemma}[theorem]{Lemma}
\newtheorem{corollary}[theorem]{Corollary}
\newtheorem{prop}[theorem]{Proposition}
\newtheorem{remark}[theorem]{Remark}
\theoremstyle{definition}
\newtheorem{definition}[theorem]{Definition}
\newcommand{\cj}{\bar}
\newcommand{\cw}{\bar{w}}
\title{Brownian motion and stochastic areas on complex full flag manifolds }
\author{Fabrice Baudoin\footnote{Research partially supported by grant 10.46540/4283-00175B from Independent Research Fund Denmark}, Nizar Demni, Teije Kuijper, Jing Wang\footnote{Research was supported in part by NSF Grant DMS-2246817}}
\begin{document}

\maketitle

\begin{abstract}
We show that the Brownian motion on the complex full flag manifold can be represented by a matrix-valued diffusion obtained from the unitary Brownian motion. This representation actually leads to an explicit formula for the characteristic function of the  joint distribution of the  stochastic areas on the full flag manifold.  The limit law for those stochastic areas is shown to be a multivariate Cauchy distribution with independent and identically distributed entries. Using a deep connection between area functionals on the flag manifold and winding functionals on complex spheres, we establish new results about simultaneous Brownian windings on the complex sphere and their asymptotics. As a byproduct, our work also unveils a new probabilistic interpretation of the Jacobi operators and polynomials on simplices.\end{abstract}

\tableofcontents

\newpage

\section{Introduction}

The study of stochastic area functionals has deep roots in both probability and geometry, tracing back to Paul L\'evy’s foundational work \cite{Levy} on planar Brownian motion. Over the years, this subject has evolved into a rich and extensive theory; see \cite{book} for a recent survey. One of the main motivations of this paper is to extend this theory to the setting of flag manifolds.

Flag manifolds play key roles in differential geometry \cite{MR2264399}, representation theory \cite{MR1491979}, algebraic geometry \cite{Brion2005}, physics \cite{MR4117041}, and numerical analysis \cite{MR4445465}. Among them, the complex full flag manifold $F_{1,2,\ldots,n-1}(\mathbb{C}^n)$ parametrizes nested sequences of complex subspaces:  
\[
\{0\} \subsetneq W_1 \subsetneq \cdots \subsetneq W_{n-1} \subsetneq \mathbb{C}^n.
\]  
The importance of this space stems from the fact that any complex partial flag manifold can be obtained from it via a submersion \cite[\textsection 1.2]{Brion2005}. It also admits a homogeneous Riemannian structure as $\mathbf{U}(n)/\mathbf{U}(1)^n$, where $\mathbf{U}(n)$ is the unitary group and $\mathbf{U}(1)^n$ its maximal torus of diagonal unitary matrices. Though not a symmetric space, the full flag manifold has a complex K\"ahler structure \cite{MR77878}, which will be a crucial ingredient in our investigations of: the Brownian motion on $F_{1,2,\ldots,n-1}(\mathbb{C}^n)$; its associated stochastic area functionals; and its connections to Brownian winding on complex spheres.

Brownian motion on Lie groups and homogeneous spaces has long been a cornerstone of stochastic differential geometry \cite{MR1284654,Dyn61,MR0359023}. A key contribution of this paper is the construction of Brownian motion on the full flag manifold via projection from unitary Brownian motion. Expressing this process in local affine coordinates derived from the quotient structure allows us to explicitly compute its generator, which governs the radial dynamics of the process. In
particular, these dynamics will be identified with Jacobi diffusions on simplices.

Building on the general theory described in \cite{book}, we then use the K\"ahler structure of $\mathbf{U}(n)/\mathbf{U}(1)^n$ to define a natural stochastic area process. Specifically, the Riemannian fibration  
\[
\mathbf{U}(1)^n \to \mathbf{U}(n) \to \mathbf{U}(n)/\mathbf{U}(1)^n
\]
allows us to view $\mathbf{U}(n)$ as a torus bundle over the full flag manifold, leading to an intrinsic $n$-dimensional area process. For a horizontal Brownian motion on $\mathbf{U}(n)$, these stochastic area measures accumulated phase differences across the torus fibers. Using a skew-product decomposition, we derive the joint characteristic function of these areas and establish that, as $t\to +\infty$, their limiting distribution follows a multivariate Cauchy law with independent components after proper rescaling. This limit theorem relies on spectral properties of Jacobi operators and their associated orthogonal polynomials on simplices.

A striking application arises in the study of simultaneous Brownian windings on the complex sphere $\mathbb{S}^{2n-1} \subset \mathbb{C}^n$, generalizing a result in \cite{baudoin2024lawindexbrownianloops}. By linking the stochastic areas of the flag manifold to angular windings of spherical Brownian motion, we establish asymptotic independence of the winding processes. Specifically, upon proper rescaling, these windings also converge in distribution to independent Cauchy random variables.

The paper is organized as follows: Section \ref{sec:preliminaries} reviews complex flag manifolds and Jacobi polynomials on simplices; Section \ref{sec:BM-on-FF} constructs the Brownian motion on the full flag manifold as a suitable projection of the unitary Brownian motion and analyzes its radial dynamics; Section \ref{sec:SAF-and-skew-product} introduces stochastic area functionals, derives the corresponding characteristic functions, and contains a proof of convergence of said functionals to a multivariate Cauchy distribution; and finally, Section \ref{sec:Brownian-winding-on-spheres} applies these results to simultaneous Brownian windings on complex spheres, establishing asymptotic laws and connections to Euclidean Brownian motion.

\section{Preliminaries}\label{sec:preliminaries}
This section is devoted to recalling some geometric and analytic preliminaries that will be used in the subsequent analysis. We begin by reviewing basic geometric facts about the flag manifold and the Riemannian submersion structure it carries. We then turn to a brief overview of the Jacobi operator and Jacobi polynomials on simplices, which will play a key role in the later sections. Throughout the paper, let $n \ge 2$ be an integer.

\subsection{Complex full flag manifolds}\label{sec-full-flag}
In this section, we review the geometric structure of complex full flag manifolds, including their realization as homogeneous spaces and the associated Riemannian submersion structure.

\begin{definition}\label{def-flag}
A \emph{flag} $(W_1,\dots,W_{k})$ of $\mathbb C^n$ is a sequence
\begin{align*}
    \{ 0 \} =:W_0\subsetneq W_1\subsetneq\dots\subsetneq W_{k}\subsetneq W_{k+1} := \mathbb C^n
\end{align*}
of complex subspaces of $\mathbb C^n$.
The number $k$ is called the \emph{length} of the flag and the $k$-tuple
\begin{align*}
    (\dim (W_1),\dots ,\dim (W_k))
\end{align*}
the \emph{signature} of the flag.
\end{definition}

\begin{definition}
    Let $V$ be a complex vector space of dimension $n$.
    The \emph{(complex) partial flag manifold} $F_{d_1,\dots ,d_k}(\mathbb C^n)$ \emph{of signature $(d_1,\dots ,d_k)$} is the collection of flags of $\mathbb C^n$ of signature $(d_1,\dots ,d_k)$. The \emph{(complex) full flag manifold} is the flag manifold $F_{1,2,\dots ,n-1}(\mathbb C^n)$.
\end{definition}

To study and define the smooth structure on $F_{1,2,\dots ,n-1}(\mathbb C^n)$ we will make a first identification.
Note that the subgroup of invertible upper triangular matrices $T^n(\mathbb{C})$ acts transitively on the set $\mathrm{GL}_n(\mathbb C)$ of $n \times n$ invertible matrices by right multiplication and that we can identify $F_{1,2,\dots ,n-1}(\mathbb C^n)$ as the quotient space $\mathrm{GL}_n(\mathbb C) /T^n(\mathbb{C})$.
Indeed, consider the canonical basis  $e_1,\dots,e_n$ of $\mathbb{C}^n$, then the surjective map 
\begin{align*}
\begin{array}{ccc}
   \mathrm{GL}_n(\mathbb C)&  \to &  F_{1,2,\dots ,n-1}(\mathbb C^n)\\
   M & \mapsto &  (W_1,\dots, W_{n-1})
\end{array}
\end{align*}
with $W_i=\mathrm{span}(Me_1,Me_2,\dots,Me_i)$ is invariant by this action and thus  descends into a bijection $\mathrm{GL}_n(\mathbb C) /T^n(\mathbb{C}) \to F_{1,2,\dots ,n-1}(\mathbb C^n)$. Since $T^n(\mathbb{C})$ acts smoothly and  properly  on $\mathrm{GL}_n(\mathbb C)$ by multiplication from the right, one has the identification
\begin{align*}
    \mathrm{GL}_n(\mathbb C) /T^n(\mathbb{C}) \cong F_{1,2,\dots ,n-1}(\mathbb C^n),
\end{align*}
and the smooth structure is the unique one turning the canonical projection into a smooth submersion. In particular, the complex dimension of $F_{1,2,\dots ,n-1}(\mathbb C^n)$ is given by: 
\begin{equation*}
n^2-\frac{n(n+1)}{2}=\frac{n(n-1)}{2}.    
\end{equation*}
In order to define a Riemannian metric on $F_{1,2,\dots ,n-1}(\mathbb C^n)$, we appeal to its compact realization:
\begin{align*}
    F_{1,2,\dots ,n-1}(\mathbb C^n)\cong \mathbf{U}(n)/ \mathbf{U}(1)^n,
\end{align*}
where $\mathbf U(n)$ is the \emph{unitary group}:
\begin{align*}
    \mathbf{U}(n) :=\{ M\in \mathbb{C}^{n\times n}\mid M^*M =I_{n}\} .
\end{align*}
This realization is actually obtained in a similar fashion as the one above and the action is given by by right multiplication from $\mathbf{U}(1)^n$, identified with the set of diagonal matrices in $\mathbf{U}(n)$. 
%The Riemannian metric on $\mathbf{U}(n)$ is the one induced from $\mathbb{C}^{n \times n}$.
The Riemannian metric on $F_{1,2,\dots ,n-1}(\mathbb C^n)$ is then the unique one making the canonical projection $\pi: \mathbf{U}(n) \to \mathbf{U}(n)/ \mathbf{U}(1)^n$ a Riemannian submersion, where $\mathbf{U}(n)$ is equipped with its bi-invariant metric induced by the Killing form. This Riemannian submersion $\pi$ has totally geodesic fibers isometric to $\mathbf{U}(1)^n$, since it is a Bérard-Bergery fibration \cite[Theorem 9.80]{Besse2007-xr}.

One can also see $F_{1,2,\dots ,n-1}(\mathbb C^n)$ as an algebraic sub-variety of $\mathbb C P^{n-1} \times \dots \times \mathbb C P^{n-1}$, where $\mathbb C P^{n-1}$ is the complex projective space, i.e. the set of complex lines in $\mathbb C^n$. Indeed, one has the embedding of $F_{1,2,\dots ,n-1}(\mathbb C^n)$ into $\mathbb C P^{n-1} \times \dots \times \mathbb C P^{n-1}$ which is given by:
\[
(W_1,\dots,W_{n-1}) \to (W_1,W_2 \cap W_1^{\perp} ,\dots, W_{n-1} \cap W_{n-2}^\perp, W_{n-1}^\perp ) .
\]
This embedding is also a Riemannian immersion as can be seen from the following commutative diagram

\[
  \begin{tikzcd}
        (\mathbb{S}^{2n-1})^n \arrow[r, "\tilde{\pi}"] & (\mathbb{C}P^{n-1})^n \\
        \mathbf{U}(n) \arrow[r,"\pi" below] \arrow[u,"\iota"] & F_{1,2,\dots ,n-1}(\mathbb{C}^n) \arrow[u]
  \end{tikzcd} ,
\]
where $\iota$ is the Riemannian immersion of $\mathbf{U}(n)$ onto $(\mathbb{S}^{2n-1})^n$ column by column and $\tilde{\pi}$ is the Riemannian submersion which tensorizes the Hopf submersion $\mathbb{S}^{2n-1} \to \mathbb{C}P^{n-1}$.

We will parametrize (a dense subset of) $F_{1,2,\dots ,n-1}(\mathbb C^n)$ using  \emph{local affine} coordinates as follows. Let 
\[
\mathcal{D} :=\left\{ \left(\begin{matrix}
        a_{11} & \dots & a_{1n} \\
        \vdots & \ddots &\vdots \\
        a_{n1} & \dots & a_{nn}
    \end{matrix} \right) \in \mathbf{U}(n) \biggm| a_{n1} \neq 0, \dots, a_{nn} \neq 0   \right\}
\]
and consider the smooth map $p: \mathcal{D} \to \mathbb{C}^{n-1} \times \dots \times  \mathbb{C}^{n-1}$ defined by
\begin{align}\label{submersion p}
p\left(\begin{matrix}
        a_{11} & \dots & a_{1n} \\
        \vdots & \ddots &\vdots \\
        a_{n1} & \dots & a_{nn}
    \end{matrix} \right) 
    =\left( \left(\begin{matrix}
        a_{11}/a_{n1} \\
        \vdots \\
        a_{(n-1)1}/a_{n1}
        \end{matrix}
        \right), \dots, \left(\begin{matrix}
        a_{1n}/a_{nn} \\
        \vdots \\
        a_{(n-1)n}/a_{nn}
        \end{matrix}
        \right) \right).
\end{align}
It is not difficult to see that for every $M_1,M_2 \in \mathcal{D}$, $p(M_1)=p(M_2)$ is equivalent to $M_2=M_1g$ for some $g \in \mathbf{U}(1)^n$ (any two such matrices differ by a diagonal unitary matrix). Since $p$ is a submersion from $\mathcal{D}$ onto its image $\mathcal{O}:=p(\mathcal{D})$, one deduces that there exists a diffeomorphism $\Psi$ between an open dense subset of $\mathbf{U}(n)/ \mathbf{U}(1)^n$ and $p(\mathcal{D})$ such that $\Psi \circ \pi =p$. This gives rise to the local set of coordinates  on $F_{1,2,\dots ,n-1}(\mathbb C^n)$. Those coordinates are compatible with the metric in the sense that $p$ is a Riemannian submersion, which implies that $\Psi$ is an isometry. Notice that $\mathcal{O}$ can explicitly be described as
\[
\mathcal{O}=\left\{ w=(w_1,\dots,w_{n-1},w_n) \in \mathbb{C}^{n-1} \times \dots \times \mathbb{C}^{n-1}  \mid w_i^* w_j =-1,\,  1 \le i <j \le n \right\},
\]
which yields a nice parametrization of a dense open subset of $F_{1,2,\dots ,n-1}(\mathbb C^n)$ by the algebraic manifold $\mathcal{O}$. This parametrization will be extensively used in the sequel. 

\subsection{Jacobi polynomials on simplices}\label{section Jacobi}
This section introduces the Jacobi operator in the simplex and describes its spectral resolution. The latter consists of a discrete spectrum and a family of eigenfunctions given by a multivariable extension of Jacobi polynomials, referred to as Jacobi polynomials in the simplex. These polynomials were constructed by T. Koornwinder in the two-variable setting and his construction reflects the right-neutrality of the Dirichlet distribution. We refer the reader to the monograph \cite{MR3289583}, 
and the papers \cite{aktacs2013sobolev} and \cite{MR2817619} for further details. 

Consider the simplex 
\begin{equation*}
 \Sigma_{n-1} :=    \{ \lambda \in \mathbb{R}^{n-1} \mid  \lambda_j \geq 0, \, 1 \leq j \leq n-1, \quad \lambda_1 + \dots + 
    \lambda_{n-1} \leq 1\}. 
\end{equation*}
The Jacobi operator in $\Sigma_{n-1}$ is then defined by
\begin{equation}\label{JacSim}
\mathcal{G}_\kappa :=\sum_{j=1}^{n-1} \lambda_j(1-\lambda_j) \frac{\partial^2}{\partial \lambda_{j}^2} 
 + \sum_{j=1}^{n-1}\left[\left(\kappa_j+\frac{1}{2}\right) 
 - \left(|\kappa| +\frac{n}{2}\right)\lambda_j\right] \frac{\partial}{\partial \lambda_{j}} 
 -\sum_{1 \leq j \neq \ell \leq n-1}\lambda_j\lambda_{\ell}  \frac{\partial^2}{\partial \lambda_{j}\lambda_{\ell}} .   
 \end{equation}
 Here, $\kappa = (\kappa_1, \dots, \kappa_n)$ is a parameter set such that $\kappa_j > -1/2$ for any $1 \leq j \leq n$ and  $|\kappa| = \kappa_1 + \dots + \kappa_n$. The operator $\mathcal{G}_\kappa$ is symmetric with respect to the Dirichlet measure on $\Sigma_{n-1}$ whose density is given by: 
\begin{multline}\label{DenDirich}
W^{(\kappa)}(\lambda_1, \dots, \lambda_{n-1}) := \frac{\Gamma(|\kappa|+(n/2))}{\prod_{j=1}^n\Gamma(\kappa_j+(1/2))}  \lambda_1^{\kappa_1-(1/2)}\cdots
\lambda_{n-1}^{\kappa_{n-1}-(1/2)} 
\\ (1-\lambda_1-\cdots-\lambda_{n-1})^{\kappa_n-(1/2)}. 
\end{multline}
The spectrum of $\mathcal{G}_\kappa$ is discrete and is given by 
%{\color{red}{
\begin{equation}\label{eq-eigenv-prelim}
-j\left( j+|\kappa| +\frac{n-2}{2}\right), \quad j \geq 0.
\end{equation}
%}}
The corresponding set of orthonormal eigenfunctions consists of the so-called Jacobi polynomials in the simplex. Given a multi-index $\tau \in \mathbb{N}^{n-1}$ 
with total weight
\begin{equation*}
|\tau| := \tau_1 + \dots + \tau_{n-1},  
\end{equation*}
the corresponding Jacobi polynomial has degree $|\tau|$ and admits the following explicit formula:
\begin{align*}
    P_{\tau}^{(\kappa)}(\lambda_1,\dots ,\lambda_{n-1}):=\frac{1}{\sqrt{C_{\tau}(\kappa)}}
    \prod_{j=1}^{n-1}\left(1-\sum_{i=1}^{j-1}\lambda_i\right)^{\tau_j} 
    P^{(a_j,\kappa_i-(1/2))}_{\tau_j}\left(\frac{2\lambda_j}{1-\sum_{i=1}^{j-1}\lambda_i} -1\right) ,
\end{align*}
where $P^{(\alpha ,\beta)}_m$ stands for the $m^{\textrm{th}}$ Jacobi polynomial of index $(\alpha ,\beta)$, 
$$a_j :=2\sum_{i=j+1}^{n-1}\tau_i +2\sum_{i=j+1}^{n-1}\kappa_i +\frac{1}{2}(n-j-2),$$ 
and 
\begin{align*}
    C_{\tau}(\kappa) := \frac{1}{(|\kappa| + (n/2))_{2|\tau|}}
    \prod_{j=1}^{n-1}\frac{(a_j+\kappa_j+(1/2))_{2\tau_j}(a_j+1)_{\tau_j}(\kappa_j+(1/2))_{\tau_j}}{(a_j+\kappa_j+(1/2))_{\tau_j}\tau_j!} .
 \end{align*}

As such, the density with respect to the Dirichlet measure $W^{(\kappa)}$ of the heat semi-group $e^{t\mathcal{G}_\kappa}$ reads 

\begin{equation}\label{kernel jacobi simplex}
q^{(\kappa_1,\dots ,\kappa_{n-1},\kappa_n)}_t(x,y) =\sum_{\tau\in\mathbb{N}^{n-1}} 
e^{-|\tau |(|\tau |+|\kappa |+(n-2)/2)t} P_{\tau}^{(\kappa)}(x)P_{\tau}^{(\kappa)}(y).  
\end{equation}
 A diffusion with generator $\mathcal{G}_\kappa$ is called a Jacobi diffusion in the simplex $\Sigma_{n-1}$. 

For symmetry reasons, it will sometimes be useful to lift Jacobi diffusions in $\Sigma_{n-1}$ to diffusions in the $n-1$ simplex of $\mathbb{R}^n$.  More precisely, define
\begin{align}\label{eq-simplex}
    \mathcal{T}_n :=\{\lambda\in\mathbb{R}^{n}\mid\lambda_j\geq 0, \, 1\leq j\leq n,\, \lambda_1 +\dots +\lambda_n =1\}.
\end{align}
It is easy to check that if $(\lambda_1(t),\dots,\lambda_{n-1}(t))$ is a diffusion with generator $\mathcal{G}_\kappa$, then $(\lambda_1(t),\dots,\lambda_{n}(t))$, where $\lambda_n(t)=1-\sum_{k=1}^{n-1} \lambda_k(t)$, is a diffusion in $\mathcal{T}_n$ with generator
\begin{equation}\label{GenJacSim1}
\widehat{\mathcal{G}}_\kappa:=\sum_{j=1}^{n} \lambda_j(1-\lambda_j) \frac{\partial^2}{\partial \lambda_{j}^2} 
 + \sum_{j=1}^{n}\left[\left(\kappa_j+\frac{1}{2}\right) 
 - \left(|\kappa| +\frac{n}{2}\right)\lambda_j\right] \frac{\partial}{\partial \lambda_{j}} 
 -\sum_{1 \leq j \neq \ell \leq n}\lambda_j\lambda_{\ell}  \frac{\partial^2}{\partial \lambda_{j}\partial \lambda_{\ell}} . 
 \end{equation}

The operator $\widehat{\mathcal{G}}_\kappa$ will be called the lift of $\mathcal{G}_\kappa$ to $\mathcal{T}_n$ and a diffusion with generator $\widehat{\mathcal{G}}_\kappa$ will be referred to as a Jacobi diffusion in $ \mathcal{T}_n$.

\section{Brownian motion on the full flag manifold}\label{sec:BM-on-FF}
In this section, we study Brownian motion processes on the full flag manifold $F_{1,2,\dots ,n-1}(\mathbb{C}^n)$. We begin by recalling the unitary Brownian motion, then introduce Brownian motion on the full flag manifold using the parametrization developed in Section \ref{sec-full-flag}. Finally, we study the associated radial processes and establish connections to Jacobi processes on the simplex.

\subsection{Unitary Brownian motion}
In this paragraph, we recall the definition of the (left) Brownian motion on the unitary group $\mathbf{U}(n)$. For further details we refer to \cite[Section 3.5]{book}. The unitary group
\begin{align*}
    \mathbf{U}(n) :=\left\{ M \in \mathbb C^{n\times n}, M^*M={I}_n \right\}
\end{align*}\index{unitary group}
is a compact simple subgroup of the general linear group and its Lie algebra
\begin{align*}
    \mathfrak{u}(n)=\left\{ A \in \mathbb C^{n\times n}, A^*+A=0 \right\}
\end{align*}
is the vector space of skew-Hermitian matrices.
One can equip $\mathfrak{u}(n)$ with the inner product:
\[
B(A_1,A_2)=-\frac{1}{2}\textrm{tr}(A_1A_2), \quad A_1, A_2\in \mathfrak{u}(n),
\]
known as the Killing form, which induces on $\mathbf{U}(n)$ a bi-invariant Riemannian metric.

Note that with respect to the Killing form an orthonormal basis of $\mathfrak{u}(n)$ can be given by
\[
\{E_{\ell j}-E_{j \ell}, i(E_{\ell j}+E_{j\ell}), T_\ell\mid 1\le \ell <j \le n\} ,
\]
where $E_{\ell j} =(\delta_{(\ell, j)}(k,m))_{1 \le k , m \le n}$, and $T_\ell=\sqrt{2}iE_{\ell\ell}$. Therefore the Brownian motion $(A(t))_{t \ge 0}$ on $\mathfrak{u}(n)$ is  of the form
\begin{align}\label{eq-At}
A(t) =\sum_{1\le \ell<j\le n}(E_{\ell j}-E_{j\ell})B_{\ell j}(t)+i\sum_{1\le \ell<j\le n}(E_{\ell j}+E_{j\ell})\tilde{B}_{\ell j}(t)+\sum_{j=1}^n T_{j}{B}_j(t),\quad t\ge0, 
\end{align}
where $B_{\ell j}$, $\tilde{B}_{\ell j}$, ${B}_j$ are  independent standard real Brownian motions. Denote
\begin{equation}\label{eq-At-ij}
A(t) =\sum_{1\le \ell,j\le n}A_{\ell j}(t)E_{\ell j},\quad t\ge 0.
\end{equation}
Then, for any $1\le \ell \not= j\le n$ the quadratic variations of its off diagonal entries are given by
\begin{equation}
\label{Brack1} 
dA_{\ell j}(t)\, d\overline{A}_{\ell j}(t)=2dt,\quad dA_{\ell j}(t)\, d{A}_{\ell j}(t) =0 ,
\end{equation}
while for any $1\le \ell\le n$, we have:
\begin{equation}\label{Brack2}
dA_{\ell \ell}(t)\, d\overline{A}_{\ell \ell}(t)=2dt, \quad dA_{\ell \ell}(t)\, d{A}_{\ell \ell}(t)=-2dt.
\end{equation}

The Brownian motion on $\mathbf{U}(n)$ thus satisfies the stochastic differential equation in Stratonovitch form:

\begin{equation}\label{eq-UBM-SDE}
dU(t)=U(t)\circ dA(t).
\end{equation}

In It\^o's form, this stochastic differential equation reads:
\begin{align}\label{eq-sde-U}
  dU(t)=U(t) dA(t)-nU(t) dt.  
\end{align}

\subsection{Generator of the Brownian motion on the full flag manifold}

In this section we compute the generator of the Brownian motion on the full flag manifold $F_{1,2,\dots ,n-1}(\mathbb{C}^n) \simeq 
\mathbf{U}(n)/\mathbf{U}(1)^n$. We use the parametrization explained in the Section \ref{sec:preliminaries}.  
Let $U(t)=(U_{ij}(t))_{1\le i,j\le n}$, $t\ge0,$ be a Brownian motion on $\mathbf{U}(n)$, which is started from a point $U(0)  \in \mathcal{D}$ as before. Since the map $p:\mathcal{D} \to \mathcal{O}$ defined by \eqref{submersion p} is a Riemannian submersion with totally geodesic fibers isometric to $\mathbf{U}(1)^n$ and since
\[
\mathbb{P} ( \exists \, t \ge 0,  U(t) \notin \mathcal{D}) =0,
\]
 one deduces that the process $w(t)=(w_{jk}(t))_{1\le j,k\le n-1}$ defined by 
 
\begin{align}\label{eq-w-def}
    w_{kj}(t) :=\frac{U_{kj}(t)}{U_{nj}(t)},\quad 1\le k \le n-1, \quad 1\le j \le n,
\end{align}
parametrizes a Brownian motion on $F_{1,2,\dots ,n-1}(\mathbb{C}^n)$. We denote the $j$-th column of $w(t)$ by 
\[
w_j(t):=(w_{1j}(t),\dots, w_{(n-1)j}(t))^{T},\quad 1\le j\le n,
\]
and set
\begin{align}\label{eq-rj}
 r_j(t):=\sqrt{|w_{1j}(t)|^2+\cdots +|w_{(n-1)j}(t)|^2},
 \quad 1 \leq j \leq n.
\end{align}
In this respect, the orthogonality of $U$ implies that
\begin{equation}\label{OR1}
\frac1{|U_{nj}|^2}=1+r_j^2, \quad 1\le j\le n,
\end{equation}
and we also recall that for all $1\le j\not=\ell\le n$,
\begin{align}\label{eq-orth-w}
\sum_{s=1}^{n-1} w_{sj}\cj{w}_{s\ell}=-1.
\end{align}

\begin{lemma}\label{lemma:expression_wt}
Let $(w(t))_{t\geq 0}$ be the Brownian motion on the full flag manifold $F_{1,2,\dots ,n-1}(\mathbb{C}^n)$ as given in \eqref{eq-w-def}. It satisfies the stochastic differential equation
\begin{align}\label{eq-w-SDE}
    dw_{kj}=\sum_{s=1}^n\frac{U_{ks}-w_{kj}U_{ns}}{U_{nj}}dA_{sj},\quad 1\le k\le n-1,1\le j\le n,
\end{align}
where $(A(t))_{t \ge 0}$  is a Brownian motion on $\mathfrak{u}(n)$ as given in \eqref{eq-At}.
\end{lemma}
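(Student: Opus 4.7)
The plan is to apply It\^o's formula directly to the rational function $w_{kj} = U_{kj}/U_{nj}$ of the entries of the unitary Brownian motion, using the SDE \eqref{eq-sde-U}. Since the map $(x,y)\mapsto x/y$ is holomorphic on $\{y \neq 0\}$ and the process stays in $\mathcal{D}$ almost surely, we only need to worry about the $dU_{kj}\,dU_{nj}$ and $(dU_{nj})^2$ It\^o corrections (no $d\overline{U}$ brackets appear), together with the drift contributed by the $-nU(t)\,dt$ term of \eqref{eq-sde-U}.

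First, from \eqref{eq-sde-U} I would write $dU_{kj}=\sum_s U_{ks}\,dA_{sj}-nU_{kj}\,dt$ and similarly for $dU_{nj}$. Substituting into the first-order part of It\^o's formula,
\[
\frac{1}{U_{nj}}\,dU_{kj}-\frac{U_{kj}}{U_{nj}^{2}}\,dU_{nj},
\]
the two drift contributions $-nU_{kj}/U_{nj}$ and $+nU_{kj}U_{nj}/U_{nj}^{2}$ cancel, and the remaining martingale part collects to exactly
\[
\sum_{s=1}^{n}\frac{U_{ks}-w_{kj}U_{ns}}{U_{nj}}\,dA_{sj},
\]
which is the right-hand side of \eqref{eq-w-SDE}.

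The second step is to verify that the second-order It\^o corrections vanish. Using \eqref{Brack1} and \eqref{Brack2} (and the fact that $A_{sj}$ and $A_{s'j}$ are built from independent real Brownians whenever $\{s,j\}\neq\{s',j\}$), the only surviving term in $dA_{sj}\,dA_{s'j}$ is the diagonal contribution $s=s'=j$, which equals $-2\,dt$. Consequently,
\[
dU_{kj}\,dU_{nj}=-2\,U_{kj}U_{nj}\,dt,\qquad (dU_{nj})^{2}=-2\,U_{nj}^{2}\,dt,
\]
and plugging these into the second-order part
\[
-\frac{1}{U_{nj}^{2}}\,dU_{kj}\,dU_{nj}+\frac{U_{kj}}{U_{nj}^{3}}\,(dU_{nj})^{2}
\]
gives $2U_{kj}/U_{nj}-2U_{kj}/U_{nj}=0$. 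Combining the two steps yields the claimed SDE.

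The only delicate point, and what I would present carefully, is the bookkeeping of the brackets $dA_{sj}\,dA_{s'j}$: one must notice that the usual ``complex Brownian'' identity $dA\,dA=0$ holds for the off-diagonal entries of $A$ because of \eqref{Brack1}, while the diagonal entries $A_{jj}=\sqrt{2}\,iB_{j}$ are purely imaginary and satisfy $(dA_{jj})^{2}=-2\,dt$ by \eqref{Brack2}. It is precisely this sign that makes the It\^o correction in the holomorphic expansion of $U_{kj}/U_{nj}$ cancel with the naive drift produced by projecting the $-nU\,dt$ term, so nothing spoils the martingale form of \eqref{eq-w-SDE}.
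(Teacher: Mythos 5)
Your proposal is correct and follows essentially the same route as the paper's proof: It\^o's formula applied to the holomorphic quotient $U_{kj}/U_{nj}$, substitution of the SDE \eqref{eq-sde-U}, and the brackets \eqref{Brack1}--\eqref{Brack2} showing that in the same column only $dA_{jj}\,dA_{jj}=-2\,dt$ survives, so that the drift cancels within the first-order terms and the two second-order corrections cancel each other. (Only your closing remark is slightly imprecise: the It\^o corrections cancel among themselves while the $-nU\,dt$ drift cancels separately in the first-order part, exactly as in your own computation, so this does not affect the argument.)
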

\begin{proof}
From \eqref{eq-w-def}  we can deduce the following stochastic differential equation in It\^o's sense:
    \begin{align}\label{eq-dw-1}
        dw_{kj}=U_{nj}^{-1}dU_{kj}-\frac{U_{kj}}{U_{nj}^2}dU_{nj}+dU_{kj}dU_{nj}^{-1} +U_{kj}\frac{dU_{nj}dU_{nj}}{U_{nj}^3}.
    \end{align}
By rewriting \eqref{eq-sde-U} coordinate-wise, we obtain that
\[
dU_{kj}=\sum_{s=1}^{n}U_{ks}dA_{sj}-n U_{kj}dt,\quad \mbox{for all }k=1,\dots,n .
\]
Plugging it into \eqref{eq-dw-1} and appealing to \eqref{Brack1} and to \eqref{Brack2}, we then obtain for any $1\le k\le n-1$ and any $1\le j\le n-1$ that
\begin{align*}
    dw_{kj}
    &=\bigg(\sum_{s=1}^n\frac{U_{ks}}{U_{nj}}dA_{sj}-nw_{kj}dt\bigg)-\bigg(\sum_{s=1}^n\frac{w_{kj}U_{ns}}{U_{nj}}dA_{sj} -nw_{kj}dt\bigg)+2w_{kj}dt-2w_{kj}dt\\
    &=\sum_{s=1}^n\frac{U_{ks}-w_{kj}U_{ns}}{U_{nj}}dA_{sj}.
\end{align*}
\end{proof}

Using Lemma \ref{lemma:expression_wt}, we shall compute the quadratic variation of $w(t)$. 
%In this respect, the notation $dw_{kj}dw_{m\ell}$ stands for the quadratic variation of these two semimartingales.

\begin{lemma}\label{lemma-qv-dw}
Consider the  Brownian motion $w(t)= (w_{kj}(t))_{1\le k\le n-1,1\le j \le n}$ on the full flag manifold $F_{1,2,\dots ,n-1}(\mathbb{C}^n)$ as in Lemma \ref{lemma:expression_wt}.
Then the quadratic variations of its entries are given by:
    \begin{align}
       &dw_{kj}d\cj{w}_{m\ell}=\delta_{j\ell} (1+r_j^2)\left( \delta_{km}+w_{kj}\cj{w}_{mj} \right)(2dt), \label{eq-dw-quad-1}\\     
       &dw_{kj}d{w}_{m\ell}=-(w_{k\ell}-w_{kj})(w_{mj}-w_{m\ell})(2dt) , \label{eq-dw-quad-2}\\
        &d\cj{w}_{kj}d{\cw}_{m\ell}=-(\cw_{k\ell}-\cw_{kj})(\cw_{mj}-\cw_{m\ell})(2dt)  \label{eq-dw-quad-3}
    \end{align}
   for any $1\le k,m\le n-1$, $1\le j,\ell\le n$, where $r_j$ is defined as in \eqref{eq-rj}.
\end{lemma}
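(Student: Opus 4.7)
The plan is to derive each of the three identities as a direct consequence of the SDE in Lemma \ref{lemma:expression_wt}. The first step is to read off from \eqref{eq-At}, \eqref{Brack1}, \eqref{Brack2} the two key brackets of the driving noise, namely
\[
dA_{sj}\, d\overline{A}_{s'\ell}=2\delta_{ss'}\delta_{j\ell}dt,\qquad dA_{sj}\, dA_{s'\ell}=-2\delta_{s\ell}\delta_{s'j}dt.
\]
The first identity combines the off-diagonal case of \eqref{Brack1} with the diagonal case of \eqref{Brack2}, both of which give coefficient $2$. The second follows by using skew-Hermiticity $\overline{A_{s'\ell}}=-A_{\ell s'}$ to reduce to the first. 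The bookkeeping here is the main subtlety: the independent real/imaginary pieces of $A$ are attached only to entries $(\ell,j)$ with $\ell\le j$, and one must check that the diagonal ($s=j=\ell=s'$) and genuinely off-diagonal cases assemble into a single uniform formula.

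Next, I would substitute \eqref{eq-w-SDE} into each of the three quadratic variations. For $dw_{kj}\, d\bar w_{m\ell}$, only the terms with $s=s'$ and $j=\ell$ survive, giving
\[
dw_{kj}\, d\bar w_{m\ell}=\frac{2\delta_{j\ell}}{|U_{nj}|^2}\sum_{s=1}^n(U_{ks}-w_{kj}U_{ns})(\overline{U_{ms}}-\overline{w_{mj}}\,\overline{U_{ns}})\,dt.
\]
Expanding the inner sum and using row-orthonormality $\sum_s U_{as}\overline{U_{bs}}=\delta_{ab}$, together with $k,m\le n-1$ to kill the cross terms $\sum_s U_{ks}\overline{U_{ns}}$ and $\sum_s U_{ns}\overline{U_{ms}}$, reduces it to $\delta_{km}+w_{kj}\overline{w_{mj}}$. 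Combining this with the identity $1/|U_{nj}|^2=1+r_j^2$ from \eqref{OR1} yields \eqref{eq-dw-quad-1}.

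Finally, for $dw_{kj}\, dw_{m\ell}$ the Kronecker symbols $\delta_{s\ell}\delta_{s'j}$ collapse the double sum to the single pair $(s,s')=(\ell,j)$, producing
\[
dw_{kj}\, dw_{m\ell}=-\frac{2}{U_{nj}U_{n\ell}}(U_{k\ell}-w_{kj}U_{n\ell})(U_{mj}-w_{m\ell}U_{nj})\,dt.
\]
Substituting $U_{k\ell}=w_{k\ell}U_{n\ell}$ and $U_{mj}=w_{mj}U_{nj}$ factors out the prefactor $U_{nj}U_{n\ell}$ and gives \eqref{eq-dw-quad-2}; note that this formula is uniform in $j,\ell$ since the case $j=\ell$ automatically produces a zero factor. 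Identity \eqref{eq-dw-quad-3} is then obtained by complex conjugation of \eqref{eq-dw-quad-2}. Apart from the sign-tracking in the $dA\, dA$ bracket, the remainder of the proof is purely algebraic and rests on the orthonormality of the rows of $U$ combined with the definition $w_{kj}=U_{kj}/U_{nj}$.
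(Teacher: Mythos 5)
Your proposal is correct and follows essentially the same route as the paper: substitute the SDE of Lemma \ref{lemma:expression_wt}, use the brackets $dA_{sj}\,d\overline{A}_{p\ell}=2\delta_{sp}\delta_{j\ell}\,dt$ and $dA_{sj}\,dA_{p\ell}=-2\delta_{s\ell}\delta_{jp}\,dt$ (via skew-Hermiticity), then reduce with row orthonormality of $U$, the relation \eqref{OR1}, and $w_{kj}=U_{kj}/U_{nj}$, with \eqref{eq-dw-quad-3} by conjugation. The only difference is that you spell out the intermediate expansions (and the uniformity check at $j=\ell$) that the paper leaves implicit.
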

\begin{proof}
    Using \eqref{eq-w-SDE} we have 
    \begin{align}\label{eq-dw-mid1}
    dw_{kj}d\cj{w}_{m\ell}=\sum_{s,p=1}^n\frac{U_{ks}-w_{kj}U_{ns}}{U_{nj}}\frac{\cj{U}_{mp}-\cj{w}_{m\ell}\cj{U}_{np}}{\cj{U}_{n\ell}}dA_{sj}d\cj{A}_{p\ell}.
\end{align}
Using \eqref{eq-At} we can easily compute that 
\[
dA_{sj}d\cj{A}_{p\ell}=\delta_{sp}\delta_{j\ell}\,(2dt).
\]
Plugging it into \eqref{eq-dw-mid1} we then obtain 
    \begin{align*}
    dw_{kj}d\cj{w}_{m\ell}
    &=\delta_{j\ell}(2dt) \sum_{s=1}^n\frac{U_{ks}-w_{kj}U_{ns}}{U_{nj}}\frac{\cj{U}_{ms}-\cj{w}_{mj}\cj{U}_{ns}}{\cj{U}_{nj}}\\
    &=\frac{\delta_{j\ell} }{|U_{nj}|^2}\left( \delta_{km}+w_{kj}\cj{w}_{mj} \right)(2dt),
\end{align*}
which yields \eqref{eq-dw-quad-1}, after using the relation \eqref{OR1}. Similarly for \eqref{eq-dw-quad-2}, we compute 
\begin{align*}
    dw_{kj}d{w}_{m\ell}
    &=\sum_{s,p=1}^n\frac{U_{ks}-w_{kj}U_{ns}}{U_{nj}}\frac{{U}_{mp}-{w}_{m\ell}{U}_{np}}{{U}_{n\ell}}dA_{sj}d{A}_{p\ell}\\
    &=-(2dt) \frac{U_{k\ell}-w_{kj}U_{n\ell}}{U_{nj}}\frac{{U}_{mj}-{w}_{m\ell}{U}_{nj}}{{U}_{n\ell}}=-(2dt)(w_{k\ell}-w_{kj})(w_{mj}-w_{m\ell}),
\end{align*}
where the second equality follows from
$dA_{sj}d{A}_{p\ell}=-dA_{sj}d\overline{A}_{\ell p} =-\delta_{s\ell}\delta_{jp}\,(2dt)$.
Finally, the identity \eqref{eq-dw-quad-3} follows readily by taking the complex conjugate of \eqref{eq-dw-quad-2}.
\end{proof}

We are now ready to compute the generator of $w(t)$.
\begin{prop}\label{Laplacian full flag}
    The stochastic process $w(t)$, $t\ge0,$ given in \eqref{eq-w-def} is a diffusion process with generator $\frac{1}{2} \Delta$ where $\Delta$  is given by
     the following second order differential operator on smooth functions on $\mathbb{C}^{n-1} \times \dots \times \mathbb{C}^{n-1}$:
    \begin{align}\label{eq-Del-n-1}
        \Delta=&4\sum_{j=1}^{n}(1+r_j^2)\sum_{k,m=1}^{n-1}(\delta_{km}+w_{kj}\cj{w}_{mj}) \frac{\partial^2 }{\partial w_{kj}\partial \cw_{mj}} \\
         &-2\sum_{1\leq j\neq \ell\leq n} \sum_{k,m=1}^{n-1} (w_{k\ell}-w_{kj})(w_{mj}-w_{m\ell}) \frac{\partial^2 }{\partial w_{kj}\partial w_{m\ell}} \notag \\
         & -2\sum_{1\leq j\neq\ell\leq n} \sum_{k,m=1}^{n-1} (\cw_{k\ell}-\cw_{kj})(\cw_{mj}-\cw_{m\ell}) \frac{\partial^2 }{\partial \cw_{kj}\partial \cw_{m\ell}},
\end{align}
where we recall $r_j=\sqrt{|w_{1j}|^2+\cdots +|w_{(n-1)j}|^2}$.
\end{prop}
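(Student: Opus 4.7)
The plan is to read off the generator directly from the martingale SDE in Lemma \ref{lemma:expression_wt} and the explicit covariations \eqref{eq-dw-quad-1}--\eqref{eq-dw-quad-3} in Lemma \ref{lemma-qv-dw}. The key observation --- already implicit in the proof of Lemma \ref{lemma:expression_wt}, where the $dt$-terms cancel --- is that the SDE for $w(t)$ is \emph{driftless}: each coordinate $w_{kj}$ is a local martingale. Consequently, when applying the complex It\^o formula to a smooth test function $f(w, \bar w)$ on $\mathbb{C}^{n-1}\times\cdots\times\mathbb{C}^{n-1}$, the only $dt$-contributions come from the pure and mixed second-order brackets, so the generator must be a purely second-order operator with no drift component.

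Concretely, I would apply the complex It\^o formula
\begin{equation*}
df = (\text{local martingale}) + \frac{1}{2}\sum f_{w_{kj}w_{m\ell}}\, dw_{kj}\, dw_{m\ell} + \frac{1}{2}\sum f_{\bar w_{kj}\bar w_{m\ell}}\, d\bar w_{kj}\, d\bar w_{m\ell} + \sum f_{w_{kj}\bar w_{m\ell}}\, dw_{kj}\, d\bar w_{m\ell},
\end{equation*}
substitute the brackets from Lemma \ref{lemma-qv-dw}, and collect the $dt$-coefficients. The mixed bracket \eqref{eq-dw-quad-1} carries a factor $\delta_{j\ell}$ which collapses one column-sum to $\sum_{j=1}^n$, yielding the first line of $\Delta$ with the overall factor $4 = 2 \cdot 2$ (one factor of $2$ from the convention \eqref{Brack1}, the other from converting $\mathcal{L}f = \frac{1}{2}\Delta f$). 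The holomorphic and anti-holomorphic brackets \eqref{eq-dw-quad-2}--\eqref{eq-dw-quad-3} vanish identically when $j = \ell$, so their contributions are naturally written as sums over $j \neq \ell$, matching the remaining two lines of $\Delta$ with prefactor $-2$.

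There is no genuine obstacle here: the proof is a bookkeeping computation once Lemmas \ref{lemma:expression_wt} and \ref{lemma-qv-dw} are in hand. The only points requiring care are the numerical factors arising from the complex Brownian motion conventions \eqref{Brack1}-\eqref{Brack2}, and the distinction between the mixed terms (which survive only along the diagonal $j = \ell$) and the pure holomorphic/anti-holomorphic terms (which vanish on the diagonal). That $\frac{1}{2}\Delta$ really is the Laplace-Beltrami operator on $F_{1,2,\dots,n-1}(\mathbb{C}^n)$ expressed in the $w$-coordinates is automatic from the construction: $p$ is a Riemannian submersion onto an open dense subset $\mathcal{O}$, and $w(t)$ is by definition the $p$-projection of the unitary Brownian motion, whose generator is $\frac{1}{2}$ times the bi-invariant Laplacian on $\mathbf{U}(n)$.
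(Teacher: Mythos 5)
Your proposal is correct and follows essentially the same route as the paper: apply the complex It\^o formula to $w(t)$, use the driftless SDE of Lemma \ref{lemma:expression_wt} so that only second-order bracket terms contribute, substitute the covariations from Lemma \ref{lemma-qv-dw}, and collect the $dt$-coefficients, with the factor bookkeeping ($4$ in the mixed line, $-2$ in the pure holomorphic/anti-holomorphic lines, the $\delta_{j\ell}$ collapse, and the vanishing of the pure brackets on the diagonal) exactly as you describe. The closing remark identifying $\tfrac12\Delta$ with the Laplace--Beltrami operator via the Riemannian submersion $p$ is consistent with the paper's setup, though not needed for the statement itself.
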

\begin{proof}
   Recall It\^o's formula for complex semimartingales: for any $Z(t)=(Z_1(t),\dots, Z_N(t))$, $t\ge0$, and any complex function $f$, we have
    \begin{align*}
   &d [f (Z(t) )]  =\sum_{j=1}^N\left( \frac{\partial f }{\partial z_j} (Z(t)) dZ_j(t) + \frac{\partial f }{\partial \overline{z}_j} (Z(t)) d\overline{Z}_j(t)\right) \\
    &\quad+\frac{1}{2} \sum_{j,\ell=1}^N  \left(\frac{\partial^2 f }{\partial z_j\partial z_\ell} (Z(t))dZ_i(t)dZ_j(t)+2\frac{\partial^2 f }{\partial z_j\partial \overline{z}_\ell} (Z(t))dZ_j(t)d\overline{Z}_\ell(t)+ \frac{\partial^2 f }{\partial \overline{z}_j\partial \overline{z}_\ell} (Z(t))d\overline{Z}_j(t)d\overline{Z}_\ell(t) \right).
    \end{align*}
   Applying this formula to the process $w(t)= (w_{jk}(t))_{1\le j,k\le n-1}$ and using Lemma \ref{lemma-qv-dw}, \eqref{eq-Del-n-1} follows after straightforward computations. 
\end{proof}
\begin{remark}
    Note that the restriction of $\Delta/2$ to functions depending only on the $j$-th column yields the infinitesimal generator of the $j$-th column diffusion $w_j(t):=(w_{1j}(t),\dots, w_{(n-1)j}(t))^{T}$, $t\ge0$: 
    \[
    2(1+r_j^2)\sum_{k,m=1}^{n-1}(\delta_{km}+w_{kj}\cj{w}_{mj}) \frac{\partial^2 }{\partial w_{kj}\partial \cw_{mj}}.
    \]
    This is indeed the generator in local affine coordinates of a Brownian motion on the complex projective space 
    \begin{equation*} 
    \mathbb{CP}^n:=\frac{\mathbf{U}(n)}{\mathbf{U}(1)\mathbf{U}(n-1)},
    \end{equation*}
    see \cite[Section 5.1]{book}. Therefore the columns of $w(t)$ are Brownian motions on $\mathbb{CP}^n$. However, they are of course not independent as can be seen from the cross terms  $\frac{\partial^2 }{\partial w_{kj}\partial w_{m\ell}}$ in formula \eqref{eq-Del-n-1}. 
    
\end{remark}

\subsection{Radial motions}
The main object of study in this section is the Jacobi process that is associated with the radial processes. Recall the Brownian motion $(w(t))_{t\geq 0}$ on $F_{1,2,\dots ,n-1}(\mathbb{C}^n)$ and its $j$-th column radial process $r_j(t)$, $t\ge0$, as defined in \eqref{eq-rj}. Consider the process
\begin{align}\label{eq-lambda}
\lambda(t):=\left(\frac{1}{1+r_1(t)^2},\dots,\frac{1}{1+r_{n}(t)^2} \right), \quad t \ge 0.
\end{align}
In the theorem below we compute the SDE satisfied by $\lambda(t)$ and its generator.
%Also, recall the definition of the simplex $\mathcal{T}_n$.
%\[
 %\mathcal{T}_n =\{\lambda\in\mathbb{R}^{n}\mid\lambda_j\geq 0, 1\leq j\leq n,\quad\lambda_1 +\dots +\lambda_n =1\}.
%\]
%The theorem below studies the stochastic differential equation and the generator of $(\lambda(t))_{t\ge0}$.
\begin{theorem}\label{Th1}
Let $\mathcal{T}_n$ be the simplex as in \eqref{eq-simplex}.
The process $(\lambda (t))_{t\geq 0}$, given as in \eqref{eq-lambda}, satisfies the stochastic differential equation
    \begin{align*}
        d\lambda_j =2(1-n\lambda_j) dt+2 \sum_{\ell=1, \ell \neq j}^n  \sqrt{ \lambda_{\ell }\lambda_j} d\gamma_{\ell j}, \quad 1 \le j \le n,
    \end{align*}
    where $(\gamma_{\ell j}(t))_{\ell < j}$, $t\geq 0$, is a Brownian motion on $\mathbb{R}^{\frac{1}{2}n(n-1)}$ and $\gamma_{\ell j} :=-\gamma_{j\ell}$ for $\ell > j$.
Consequently, $(\lambda (t))_{t\geq 0}$ is a Jacobi process  in the simplex $\mathcal{T}_n$ with generator $2\widehat{\mathcal{G}}_{1/2,\dots,1/2}$, where 

\begin{align}\label{Gen2}
\widehat{\mathcal{G}}_{1/2,\dots,1/2}  &=  
 \sum_{j=1}^{n} \lambda_j(1-\lambda_j) \frac{\partial^2}{\partial \lambda_{j}^2} 
 + \sum_{j=1}^{n}\left(1 - n\lambda_j\right) \frac{\partial}{\partial \lambda_{j}} 
 -\sum_{1 \leq j \neq \ell \leq n}\lambda_j\lambda_{\ell}  \frac{\partial^2}{\partial \lambda_{j}\partial\lambda_{\ell}}
\end{align}
is the Jacobi operator with parameter $\kappa=(1/2,\dots,1/2)$.

\end{theorem}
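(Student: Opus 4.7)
The plan is to exploit the identity $\lambda_j = |U_{nj}|^2$, which is exactly \eqref{OR1}, so that each $\lambda_j$ becomes a simple quadratic function of a single entry of $U$ rather than a rational function of the $w$-coordinates. This reduces the problem to applying It\^o's formula to $|U_{nj}|^2$ using the coordinate-wise form $dU_{nj} = \sum_s U_{ns}\, dA_{sj} - nU_{nj}\, dt$ of the unitary SDE \eqref{eq-sde-U}, thereby sidestepping the more cumbersome $w$-coordinate expressions of Lemma \ref{lemma:expression_wt}.

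First, I would compute the It\^o correction
\[
dU_{nj}\, d\cj U_{nj} = \sum_{s,p} U_{ns}\cj U_{np}\, dA_{sj}\, d\cj A_{pj} = 2\sum_s |U_{ns}|^2\, dt = 2\, dt,
\]
using $\sum_s |U_{ns}|^2 = 1$. Combining this with the two drift contributions $-nU_{nj}\cj U_{nj}\, dt$ and its complex conjugate yields the drift $2(1-n\lambda_j)\, dt$ in the statement.

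For the martingale part $dM_j := U_{nj}\, d\cj U_{nj} + \cj U_{nj}\, dU_{nj}$, the key observation is that the diagonal contribution $s = j$ vanishes because $A_{jj} = \sqrt{2}\, i\, B_j$ is purely imaginary, so the sum collapses to indices $s \neq j$. I would then propose, for $\ell \neq j$, the candidate
\[
d\gamma_{\ell j} := \frac{1}{2\sqrt{\lambda_\ell \lambda_j}}\bigl(U_{n\ell}\cj U_{nj}\, dA_{\ell j} + U_{nj}\cj U_{n\ell}\, d\cj A_{\ell j}\bigr)
\]
and verify three properties: (i) the antisymmetry $\gamma_{j\ell} = -\gamma_{\ell j}$, which follows instantly from $A_{j\ell} = -\cj A_{\ell j}$; (ii) $(d\gamma_{\ell j})^2 = dt$, a one-line use of \eqref{Brack1}; and (iii) $d\gamma_{\ell j}\, d\gamma_{mk} = 0$ whenever the unordered pairs $\{\ell, j\} \neq \{m, k\}$. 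By L\'evy's characterization, $(\gamma_{\ell j})_{\ell < j}$ is then a standard Brownian motion in $\R^{n(n-1)/2}$, and by construction $dM_j = 2\sum_{\ell \neq j}\sqrt{\lambda_\ell \lambda_j}\, d\gamma_{\ell j}$, giving the stated SDE.

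The second assertion follows immediately: reading off the first- and second-order coefficients of the SDE, with off-diagonal diffusion entries $-4\lambda_j\lambda_k$ and diagonal entries $4\lambda_j(1-\lambda_j)$, one recognizes exactly twice the coefficients of $\widehat{\mathcal{G}}_{1/2,\ldots,1/2}$ as written in \eqref{GenJacSim1} (the specialization $\kappa_j = 1/2$ collapses the drift $(\kappa_j + \tfrac{1}{2}) - (|\kappa| + \tfrac{n}{2})\lambda_j$ to $1 - n\lambda_j$). The genuinely delicate step is (iii): it requires sorting through the cases where $\{\ell, j\}$ and $\{m, k\}$ share zero, one, or two elements, using only \eqref{Brack1}--\eqref{Brack2} together with $A_{\ell j} = -\cj A_{j\ell}$. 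This is the main bookkeeping burden of the argument, but it is purely mechanical and the algebraic cancellations consistently produce $\sum_s |U_{ns}|^2 = 1$ at the right places.
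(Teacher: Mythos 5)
Your proposal is correct, and it reaches the stated SDE by a more direct computational route than the paper. The paper stays inside the flag-manifold parametrization: it applies It\^o's formula to $\lambda_j=(1+w_j^*w_j)^{-1}$, feeds in the SDE for $w$ from Lemma \ref{lemma:expression_wt} and the quadratic variations of Lemma \ref{lemma-qv-dw}, uses the orthogonality relation \eqref{eq-orth-w} together with the phase decomposition $U_{nj}=e^{i\varphi_j}\sqrt{\lambda_j}$, and only then defines $d\gamma_{\ell j}=\tfrac12\bigl(e^{i(\varphi_\ell-\varphi_j)}dA_{\ell j}+e^{-i(\varphi_\ell-\varphi_j)}d\overline{A}_{\ell j}\bigr)$ and invokes L\'evy's characterization. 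You instead exploit \eqref{OR1} to write $\lambda_j=|U_{nj}|^2$ and apply It\^o directly to the last row of the unitary SDE \eqref{eq-sde-U}; since $U_{n\ell}\overline{U}_{nj}=\sqrt{\lambda_\ell\lambda_j}\,e^{i(\varphi_\ell-\varphi_j)}$, your $\gamma_{\ell j}$ are literally the same processes as the paper's, and the rest (antisymmetry from skew-Hermitianity, $(d\gamma_{\ell j})^2=dt$ from \eqref{Brack1}, vanishing of the diagonal term because $A_{jj}$ is purely imaginary, drift $2(1-n\lambda_j)\,dt$ from the It\^o correction $2\,dt$ plus $-2n\lambda_j\,dt$) checks out. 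What your route buys is economy: it bypasses Lemmas \ref{lemma:expression_wt} and \ref{lemma-qv-dw} entirely and makes the unit-row identity $\sum_s|U_{ns}|^2=1$ do all the work; what the paper's route buys is that it is phrased intrinsically in the coordinates in which $\lambda$ is actually defined in \eqref{eq-lambda}, and its lemmas are reused elsewhere (e.g.\ in Proposition \ref{Laplacian full flag} and Theorem \ref{horizontal Laplace}). Two small remarks: step (iii) is lighter than you suggest, since $d\gamma_{\ell j}\,d\gamma_{mk}=0$ for distinct unordered pairs follows at once from the independence, visible in \eqref{eq-At}, of the complex Brownian motions attached to distinct index pairs (the only genuine coincidence case, reversed pairs, is already covered by antisymmetry); and to conclude that $\lambda$ is a Jacobi process \emph{in the simplex} $\mathcal{T}_n$ you should state explicitly that $\sum_{j=1}^n\lambda_j=\sum_{j=1}^n|U_{nj}|^2=1$, which is immediate from unitarity and is the same observation you already use for the It\^o correction.
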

\begin{proof}
    Applying Itô's formula to \eqref{eq-lambda} we obtain
    \begin{align}\label{Formula w: proof lambda}
        d\lambda_j =d\left(\frac{1}{1+w_j^*w_j}\right) 
        =-\frac{dw_j^*w_j + w_j^*dw_j +dw_j^*dw_j}{(1+w_j^* w_j )^2}+\frac{(dw_j^*w_j + w_j^*dw_j)^2 }{(1+w_j^* w_j )^3} .
    \end{align}

    By Lemma \ref{lemma:expression_wt}  we have

    \begin{align*}
    dw_j^*w_j + w_j^*dw_j =\sum_{\ell=1, \ell \neq j}^n \frac{\sum_{\alpha =1}^n\overline{U}_{\alpha\ell}w_{\alpha j} -r_j^2\overline{U}_{n\ell}}{\overline{U}_{nj}}d\overline{A}_{\ell j} + \sum_{\ell=1, \ell \neq j}^n\frac{\sum_{\alpha =1}^n\overline{w}_{\alpha j}U_{\alpha\ell} -r_j^2U_{n\ell }}{U_{nj}}dA_{\ell j}.
    \end{align*}
    
    From \eqref{OR1} and \eqref{eq-lambda} we know that $|U_{nj}|^2 =\lambda_j$, therefore $U_{nj} =e^{i\varphi_j}\sqrt{\lambda_j}$ for some real valued random variable $\varphi_j$.
    Using this, together with the relation $w_{kj} =U_{kj}U^{-1}_{nj}$ and equation \eqref{eq-orth-w}, we obtain

    \begin{align*}
    dw_j^*w_j + w_j^*dw_j       &=\sum_{\ell=1, \ell \neq j}^n \left(\sum_{\alpha =1}^n\overline{w}_{\alpha\ell}w_{\alpha j} -r_j^2\right)\frac{\overline{U}_{n\ell}}{\overline{U}_{nj}}d\overline{A}_{\ell j} + \sum_{\ell=1, \ell \neq j}^n \left(\sum_{\alpha =1}^n\overline{w}_{\alpha j}w_{\alpha\ell} -r_j^2\right)\frac{U_{n\ell }}{U_{nj}}dA_{\ell j} \\
     &= - \sum_{\ell=1, \ell \neq j}^n \left( 1 +r_j^2\right)\sqrt{\frac{\lambda_{\ell }}{\lambda_{j}}}e^{-i(\varphi_{\ell} -\varphi_j)}d\overline{A}_{\ell j}  -\sum_{\ell=1, \ell \neq j}^n\left( 1 +r_j^2\right)\sqrt{\frac{\lambda_{\ell}}{\lambda_{j}}}e^{i(\varphi_{\ell} -\varphi_j)}dA_{\ell j}.
    \end{align*}

     Using \eqref{eq-At} and \eqref{eq-At-ij} one can easily verify that
     \begin{equation*}
     d\gamma_{\ell j}:=\frac{1}{2}(e^{i(\varphi_{\ell} -\varphi_j)}dA_{\ell j} +e^{-i(\varphi_{\ell } -\varphi_j)}d\overline{A}_{\ell j})=\cos (\varphi_{\ell j})dB_{\ell j}^1-\sin (\varphi_{\ell j})dB_{\ell j}^2
     \end{equation*}
     and $\gamma_{\ell j}=-\gamma_{j\ell}$, where $B_{\ell j} =B_{lj}^1 +iB_{lj}^2$ are the complex valued Brownian motions from \eqref{eq-At}. By the Lévy characterisation theorem, $(\gamma_{\ell j} (t))_{\ell <j}$, $t\geq 0$, is standard Brownian motion on $\mathbb{R}^{\frac{1}{2}n(n-1)}$. Therefore, we obtain
     \begin{align}\label{eq-dwdw*-1}
          dw_j^*w_j + w_j^*dw_j  =- 2 \sum_{\ell=1, \ell \neq j}^n  \sqrt{\frac{\lambda_{\ell }}{\lambda_{j}^3}} d\gamma_{\ell j}
     \end{align}
     for $1\leq j\leq n$.
     Consequently,
     \begin{align}\label{eq-dwdw*-2}
     (dw_j^*w_j + w_j^*dw_j)^2  =4 \sum_{\ell=1, \ell \neq j}^n  \frac{\lambda_{\ell }}{\lambda_{j}^3} dt=4  \frac{1-\lambda_j}{\lambda_{j}^3} dt .
      \end{align}
     On the other hand, from Lemma \ref{lemma-qv-dw} we have
     \begin{align}\label{eq-dwdw*-3}
     dw_j^*dw_j=\sum_{k=1}^{n-1} d\bar{w}_{kj}dw_{kj}=2(1+r_j^2)(n-1+r_j^2)dt=\frac{2}{\lambda_j} \left(n-2+\frac{1}{\lambda_j}\right)dt.
      \end{align}
    Plugging \eqref{eq-dwdw*-1} and \eqref{eq-dwdw*-3} into \eqref{Formula w: proof lambda} we end up with:
       \begin{align*}
        d\lambda_j %&=-\frac{dw_j^*w_j + w_j^*dw_j +dw_j^*dw_j}{(1+w_j^* w_j )^2}+\frac{(dw_j^*w_j + w_j^*dw_j)^2 }{(1+w_j^* w_j )^3} \\
         &=2 \sum_{\ell=1, \ell \neq j}^n  \sqrt{ \lambda_{\ell }\lambda_j} d\gamma_{\ell j}-2 \left((n-2)\lambda_j+1 \right)dt+4  (1-\lambda_j)dt \\
         &=2(1-n\lambda_j) dt+2 \sum_{\ell=1, \ell \neq j}^n  \sqrt{ \lambda_{\ell }\lambda_j} d\gamma_{\ell j}, \quad 1\leq  j\leq n.
    \end{align*}
  From its definition we know that $\lambda_j=\frac{1}{1+r_j^2}=|U_{nj}|^2$. Hence 
  \[
  \sum_{j=1}^n\lambda_j=U_n^*U_n=1,
  \]
  which implies that the process $(\lambda(t))_{t\ge0}$ lives in the simplex $\mathcal{T}_n$. Lastly, one obtain the generator $2\widehat{\mathcal{G}}_{1/2,\dots,1/2}$ of $\lambda(t)$ following standard computations.
     \end{proof}

     \begin{remark}
         Since 
         \begin{equation*}
          1+r_j^2 = \frac{1}{|U_{nj}|^2}, \quad 1\leq j\leq n,   
         \end{equation*}
one has $\lambda(t) = (|U_{n1}(t)|^2, \dots, |U_{nn}(t)|^2)$.  
Since the last row vector of the random matrix $U(t)$ is a Brownian motion on the sphere, then the joint distribution of any $k$-tuple 
\begin{equation*}
(|U_{n1}(t)|^2, \dots, |U_{nk}(t)|^2), \quad 1\leq k \leq n,   \end{equation*}
was determined in \cite{MR3322612} using the decomposition of unitary spherical harmonics under the action of the unitary group $\mathbf{U}(k)$. Moreover, the corresponding  infinitesimal generator was informally determined using integration by parts and coincides when $k=n$ with 
$\widehat{\mathcal{G}}_{1/2,\dots,1/2}$. As a matter of fact, Theorem \ref{Th1} provides a direct derivation of this generator.
\end{remark}

\section{Stochastic area functionals and skew-product decompositions}\label{sec:SAF-and-skew-product}
In this section, we introduce the stochastic area functionals associated with Brownian motion on the full flag manifold. We then derive explicit expressions for their characteristic functions and prove that these functionals converge in distribution to a multivariate Cauchy distribution.
\subsection{The unitary group as a torus bundle} 

Recall that one can see the complex full flag manifold $F_{1,2,\dots ,n-1}(\mathbb C^n)$ as the Riemannian homogeneous space $\mathbf U(n)/\mathbf{U}(1)^n$, where the action is the one by right multiplication from $\mathbf{U}(1)^n$, identified with the set of diagonal matrices in 
$\mathbf{U}(n)$. This yields a fibration
\begin{align}\label{torus bundle}
\mathbf{U}(1)^n \to \mathbf U(n) \to F_{1,2,\dots ,n-1}(\mathbb C^n) ,
\end{align}
which allows us to see the unitary group $\mathbf U(n)$ as a torus $\mathbf{U}(1)^n$-bundle over $F_{1,2,\dots ,n-1}(\mathbb C^n)$. As already pointed out, the canonical projection $\pi: \mathbf{U}(n) \to \mathbf{U}(n)/ \mathbf{U}(1)^n$ is a Riemannian submersion with totally geodesic fibers isometric to $\mathbf{U}(1)^n$. The horizontal space of that submersion is denoted by $\mathcal{H}$, i.e. $\mathcal{H}$ is the orthogonal complement of the kernel of the derivative of $\pi$. The vertical space of the submersion, i.e. the kernel of the derivative of $\pi$, will be denoted by $\mathcal{V}$. We will consider the following vector fields on $\mathbf U(n)$ given at $M \in \mathbf U(n)$ by
\[
\frac{\partial f }{\partial \theta_j} (M) := \left.\frac{d}{ds} \right|_{ s=0} f \left( Me^{is E_{jj}} \right) ,
\]
where we use the same notation as before $E_{j j} =(\delta_{(j, j)}(k,m))_{1 \le k , m \le n}$. This notation is consistent with the fact that if we simply parametrize 
$\mathbf{U}(n)$ as a subset of the set of matrices $$\left\{ \left(\begin{matrix}
        a_{11} & \dots & a_{1n} \\
        \vdots & \ddots &\vdots \\
        a_{n1} & \dots & a_{nn}
    \end{matrix} \right) \Bigm| a_{ij} \in \mathbb{C} \right\},
$$
then it is plain that:
\[
\frac{\partial }{\partial \theta_j} =i \sum_{k=1}^n \left(  a_{kj} \frac{\partial}{\partial a_{kj}}-\overline{a}_{kj} \frac{\partial}{\partial \overline{a}_{kj}} \right).
\]
Notice that the vector fields $\frac{\partial  }{\partial \theta_j}$, $1\leq j\leq n $, commute and form at any point a basis of the vertical space $\mathcal{V}$.

\begin{lemma}
Consider the $\mathbb{R}^n$-valued one-form on $\mathbf{U}(n)$ given by
\[
\eta=(\eta_1,\dots,\eta_n) ,
\]
where
\begin{align}\label{connection form 3}
\eta_j :=\frac{1}{2i} \sum_{k=1}^{n}\left(  \overline{a}_{kj} da_{kj} -a_{kj} d\overline{a}_{kj}   \right).
\end{align}
Then, $\eta$ is the connection form of the torus bundle \eqref{torus bundle}, that is:

\begin{itemize}
\item[(i)] for every $g \in \mathbf{U}(1)^n$, $g^*\eta=\eta$ (invariance of $\eta$ with respect to the group action);
\item[(ii)] $\eta_j \left( \frac{\partial}{\partial \theta_i} \right)=\delta_{ij}$;
\item[(iii)] $\mathrm{ker} (\eta)=\mathcal{H}$.
\end{itemize}

\end{lemma}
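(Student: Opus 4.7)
The plan is to treat the three properties separately, with (i) being essentially a phase-cancellation computation, and (ii), (iii) being handled uniformly by evaluating $\eta_j$ on an arbitrary tangent vector expressed in terms of $\mathfrak{u}(n)$ via left translation.

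For (i), I would parametrize $g \in \mathbf{U}(1)^n$ as $g = \mathrm{diag}(e^{i\alpha_1},\dots,e^{i\alpha_n})$ and observe that under $R_g: M \mapsto Mg$, the coordinate function $a_{kj}$ pulls back to $e^{i\alpha_j}a_{kj}$, so $R_g^*(da_{kj}) = e^{i\alpha_j}da_{kj}$. Substituting into the defining formula \eqref{connection form 3} shows that each term $\bar{a}_{kj}\,da_{kj}$ and $a_{kj}\,d\bar{a}_{kj}$ is individually invariant since the phases $e^{\pm i\alpha_j}$ cancel, yielding $R_g^*\eta_j = \eta_j$.

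The heart of the argument is a single evaluation that feeds both (ii) and (iii). Any tangent vector at $M \in \mathbf{U}(n)$ can be written uniquely as $MA$ for some $A \in \mathfrak{u}(n)$. Pairing $\eta_j$ with $MA$ gives the $j$-th column computation
\begin{equation*}
\eta_j(MA) = \frac{1}{2i}\sum_{k=1}^n\bigl(\bar{a}_{kj}(MA)_{kj} - a_{kj}\overline{(MA)_{kj}}\bigr) = \frac{1}{2i}\bigl((M^*MA)_{jj} - \overline{(M^*MA)_{jj}}\bigr),
\end{equation*}
and the unitarity relation $M^*M = I_n$ collapses this to $\eta_j(MA) = \frac{1}{2i}(A_{jj} - \bar{A}_{jj}) = -iA_{jj}$, the latter being real because $A$ is skew-Hermitian. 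From this formula, (ii) is immediate upon recognizing $\partial/\partial\theta_i|_M = M(iE_{ii})$: the matrix $A = iE_{ii}$ has $A_{jj} = i\delta_{ij}$, giving $\eta_j(\partial/\partial\theta_i) = \delta_{ij}$.

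For (iii), the same formula shows $\ker\eta|_M = \{MA : A \in \mathfrak{u}(n),\ A_{jj} = 0 \text{ for all } j\}$, which is an $(n^2-n)$-dimensional subspace complementary to $\mathcal{V}_M = \{M(iD) : D \in \mathbb{R}^n \text{ diagonal}\}$. To identify this kernel with $\mathcal{H}_M$, I would use the bi-invariance of the metric to rewrite $\langle MA_1, MA_2\rangle_M = -\tfrac12\mathrm{tr}(A_1A_2)$ and check that requiring $-\tfrac12\mathrm{tr}(A \cdot iE_{jj}) = 0$ for every $j$ forces the diagonal entries of $A$ to vanish. Since both subspaces have the same dimension and the same defining condition on $A$, they coincide. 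The only subtlety worth being careful about is keeping the Hermitian-transpose conventions straight in the column-wise computation that reduces $\sum_k \bar{M}_{kj}(MA)_{kj}$ to $A_{jj}$; beyond that the argument is essentially three short calculations with no genuine obstacle.
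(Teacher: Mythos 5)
Your proof is correct, and it takes a more computational route than the paper. The paper's argument is a one-liner by identification: each $\eta_j$ is recognized as the standard contact form of the unit sphere $\mathbb{S}^{2n-1}$ carrying the $j$-th column (via the column-wise immersion $\mathbf{U}(n)\hookrightarrow(\mathbb{S}^{2n-1})^n$), whose kernel is the horizontal space of the Hopf submersion $\mathbb{S}^{2n-1}\to\mathbb{C}P^{n-1}$; the $\mathbf{U}(1)$-invariance and normalization $\eta_j(\partial/\partial\theta_j)=1$ are then quoted as known facts, from which (i)--(iii) ``easily follow.'' You instead prove everything from scratch: the phase-cancellation argument for (i), and the key identity $\eta_j(MA)=-iA_{jj}$ for $A\in\mathfrak{u}(n)$, obtained from $M^*M=I_n$, which simultaneously gives (ii) with $A=iE_{ii}$ and reduces (iii) to checking that $\{A_{jj}=0 \text{ for all } j\}$ is exactly the orthogonal complement of the span of the $iE_{jj}$ with respect to $-\tfrac12\mathrm{tr}(A_1A_2)$. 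What your approach buys is self-containedness and, arguably, a cleaner justification of (iii): the paper's appeal to the Hopf fibration implicitly uses the compatibility of the column-wise immersion with the two submersions to identify $\mathcal{H}$, whereas you verify directly that $\ker\eta$ is the metric orthocomplement of the vertical distribution of $\pi:\mathbf{U}(n)\to\mathbf{U}(n)/\mathbf{U}(1)^n$. What the paper's route buys is brevity and the conceptual link to the spherical contact form that is reused later (e.g.\ in the cylindric decomposition \eqref{eq-contact-eta-sphere}). All of your computations check out, including $\bar A_{jj}=-A_{jj}$ making $-iA_{jj}$ real and the dimension count $n^2-n$.
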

 
\begin{proof}
 
 The one-form $\eta_j$ is the contact form of the unit sphere
 \[
 \mathbb{S}^{2n-1} :=\left\{ (a_{1j},\dots,a_{nj}) \in \mathbb{C}^n \biggm| \sum_{k=1}^n |a_{kj}|^2 =1  \right\}
 \]
 and its kernel is the horizontal space of the Hopf submersion $\mathbb{S}^{2n-1} \to \mathbb{C}P^{n-1}$. Therefore $\eta_j$ is $\mathbf{U}(1)$-invariant and satisfies $\eta_j \left( \frac{\partial}{\partial \theta_j} \right)=1$. The properties (i), (ii) and (iii) then easily follow.
 \end{proof}
 
 We will consider a convenient local trivialization of $\mathbf{U}(n)$ seen as a torus $\mathbf{U}(1)^n$-bundle over $F_{1,2,\dots ,n-1}(\mathbb C^n)$. To this end, recall the following notations:
\[
\mathcal{O}=\left\{ w=(w_1,\dots,w_{n}) \in \mathbb{C}^{n-1} \times \dots \times \mathbb{C}^{n-1}  \mid w_i^* w_j =-1,\,  1 \le i <j \le n \right\},
\]
and
\[
\mathcal{D}=\left\{ \left(\begin{matrix}
        a_{11} & \dots & a_{1n} \\
        \vdots & \ddots &\vdots \\
        a_{n1} & \dots & a_{nn}
    \end{matrix} \right) \in \mathbf{U}(n) \Bigm| a_{1n} \neq 0, \dots, a_{nn} \neq 0   \right\} .
\]
We will then use the following cylindric parametrization of $\mathcal{D}$ :
\begin{align}
\begin{cases}
\begin{array}{ccccc}
\mathbb{R}^n &\times & \mathcal{O} & \to & \mathcal{D} \\
(\theta & , &w) & \to &   \left(\begin{matrix}
        \frac{e^{i\theta_1}w_{11} }{\sqrt{1+|w_1|^2}} & \dots & \frac{e^{i\theta_n}w_{1n} }{\sqrt{1+|w_n|^2}} \\
        \vdots & \ddots &\vdots \\
        \frac{e^{i\theta_1}w_{(n-1)1} }{\sqrt{1+|w_1|^2}} & \dots & \frac{e^{i\theta_n}w_{(n-1)n} }{\sqrt{1+|w_n|^2}} \\
       \frac{e^{i\theta_1} }{\sqrt{1+|w_1|^2}}  & \dots &  \frac{e^{i\theta_n} }{\sqrt{1+|w_n|^2}} 
    \end{matrix} \right)
\end{array} .
\end{cases}
\end{align}

In this parametrization,  the connection form  \eqref{connection form 3} admits the following decomposition:

 \begin{align}\label{eq-contact-eta-sphere}
\eta_j =  d\theta_j+\frac{i}{2(1+|w_j|^2)}\sum_{k=1}^{n-1}(w_{kj}d\overline{w}_{kj}-\overline{w}_{kj}dw_{kj}).
\end{align}

\subsection{Horizontal Brownian motion on \texorpdfstring{$\mathbf{U}(n)$}{U(n)}}

Recall the Riemannian submersion $p: \mathcal{D} \to \mathcal{O}$ defined by
\begin{align}\label{submersion p:eq2}
p\left(\begin{matrix}
        a_{11} & \dots & a_{1n} \\
        \vdots & \ddots &\vdots \\
        a_{n1} & \dots & a_{nn}
    \end{matrix} \right) 
    =\left( \left(\begin{matrix}
        a_{11}/a_{n1} \\
        \vdots \\
        a_{(n-1)1}/a_{n1}
        \end{matrix}
        \right), \dots, \left(\begin{matrix}
        a_{1n}/a_{nn} \\
        \vdots \\
        a_{(n-1)n}/a_{nn}
        \end{matrix}
        \right) \right)
\end{align}
and the  Laplace-Beltrami operator $\Delta$ on $\mathcal{O}$ from Proposition \eqref{Laplacian full flag}. Using the above Riemannian submersion one can obtain the horizontal Laplacian on $\mathbf{U}(n)$ by taking the horizontal lift of $\Delta$ through the projection map $p$, i.e. as the operator $\Delta_{\mathcal{H}}$ satisfying
\begin{align}\label{horizontal laplacian}
\Delta (f \circ p)=(\Delta_{\mathcal{H}} f) \circ p, \quad f \in C^\infty (\mathcal{D}).
\end{align}

\begin{definition}
A horizontal Brownian motion on $\mathbf{U}(n)$ is a diffusion on $\mathcal{D}$ with generator $\frac{1}{2} \Delta_\mathcal{H}$.
\end{definition}

\begin{definition}\label{def-theta}
 Let $w(t)$ be a Brownian motion on $F_{1,2,\dots ,n-1}(\mathbb C^n)$, i.e. a diffusion process with generator $\frac{1}{2} \Delta$ where $\Delta$  is given by
     \eqref{eq-Del-n-1}. The stochastic area process is the $\mathbb{R}^n$-valued process
     \begin{align}\label{eq-theta}
          \theta(t)=(\theta_1(t),\dots,\theta_n(t)) ,
     \end{align}
where
\[
\theta_j(t):=-\int_{w_j[0,t]} \alpha=\frac{1}{2i}\sum_{k=1}^{n-1} \int_0^t \frac{w_{kj}(s)  d\overline{w}_{kj}(s)-\overline{w}_{kj}(s) dw_{kj}(s)}{1+|w_j(s)|^2},
\]
and the above stochastic integrals are understood in the Stratonovich, or equivalently in the It\^o sense.
 %{\color{red}{Fabrice (note to self): Include definition of area form and explain the change of sign here for the definition of $\theta_j$ since it appears we don't have symmetry $\theta_j \to -\theta_j$ anymore)}}.

\end{definition}

 Here $\alpha$ denotes the area form on $\mathbb{C}P^{n-1}$, which is the one-form given in the local affine coordinates of $\mathbb{C}P^{n-1}$ by
\[
\alpha=\frac{i}{2}\sum_{k=1}^{n-1}  \frac{w_{k}  d\overline{w}_{k}-\overline{w}_{k} dw_{k}}{1+|w|^2}.
\]
The area form $\alpha$ was first introduced in \cite{MR3719061} and we refer to \cite[Section 5.1]{book} for an extensive overview of its properties, the most important one being that $d\alpha$ is almost everywhere the K\"ahler form on $\mathbb{C}P^{n-1}$, hence the terminology area form. Since $w_j(t)$ is a Brownian motion on $\mathbb{C}P^{n-1}$, the process $\theta_j(t)$ is therefore interpreted as a stochastic area process in this space. With respect to  \cite{book, MR3719061} we point out a sign difference in our definition of the stochastic area. We note that the full flag manifold is itself a K\"ahler manifold, even a projective variety, since it is immersed in $\mathbb{C}P^{n-1} \times \cdots \times \mathbb{C}P^{n-1}$ and its K\"ahler form is given on $\mathcal{O}$  by
\[
\omega=\sum_{j=1}^{n} d\alpha_j ,
\]
where 
\begin{equation*}
\alpha_j :=\frac{i}{2(1+|w_j|^2)}\sum_{k=1}^{n-1}(w_{kj}d\overline{w}_{kj}-\overline{w}_{kj}dw_{kj}).
\end{equation*}

\begin{theorem}\label{skew product unitary}
Let $w(t)$ be a Brownian motion on $F_{1,2,\dots ,n-1}(\mathbb C^n)$ and let $\theta(t)$ be its stochastic area process as in \eqref{eq-theta}. The process
\begin{align*}
X(t)=
  \left(\begin{matrix}
        \frac{e^{i\theta_1(t)}w_{11} (t)}{\sqrt{1+|w_1(t)|^2}} & \dots & \frac{e^{i\theta_n(t)}w_{1n}(t) }{\sqrt{1+|w_n(t)|^2}} \\
        \vdots & \ddots &\vdots \\
        \frac{e^{i\theta_1(t)}w_{(n-1)1}(t) }{\sqrt{1+|w_1(t)|^2}} & \dots & \frac{e^{i\theta_n(t)}w_{(n-1)n} (t)}{\sqrt{1+|w_n(t)|^2}} \\
       \frac{e^{i\theta_1(t)} }{\sqrt{1+|w_1(t)|^2}}  & \dots &  \frac{e^{i\theta_n(t)} }{\sqrt{1+|w_n(t)|^2}} 
    \end{matrix} \right)
\end{align*}
is a horizontal Brownian motion on the unitary group $\mathbf{U}(n)$.
\end{theorem}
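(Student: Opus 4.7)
The plan is to identify $X(t)$ with the horizontal stochastic lift of the Brownian motion $w(t)$ from $F_{1,\dots,n-1}(\mathbb{C}^n)$ to $\mathbf{U}(n)$. The argument reduces to verifying two properties of $X(t)$: first, that it projects correctly, i.e. $p(X(t))=w(t)$; and second, that it is horizontal, namely $\eta_j(\circ dX(t))=0$ in the Stratonovich sense for every $j=1,\dots,n$. Once these are established, one invokes the standard fact—valid for Riemannian submersions with totally geodesic fibers such as $\pi$—that the horizontal stochastic lift of Brownian motion on the base has $\tfrac12\Delta_{\mathcal{H}}$ as its generator, which is precisely the definition of a horizontal Brownian motion on $\mathbf{U}(n)$.

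The projection identity is immediate from the cylindric parametrization of $\mathcal{D}$: column-by-column, the $(k,j)$-entry of $X(t)$ divided by the $(n,j)$-entry equals $w_{kj}(t)$ for $1\le k\le n-1$, so $X(t)\in\mathcal{D}$ and $p(X(t))=w(t)$ by \eqref{submersion p:eq2}. For horizontality, I would use the decomposition \eqref{eq-contact-eta-sphere}, $\eta_j=d\theta_j+\alpha_j$ with $\alpha_j=\frac{i}{2(1+|w_j|^2)}\sum_{k=1}^{n-1}(w_{kj}d\overline{w}_{kj}-\overline{w}_{kj}dw_{kj})$. In the $(\theta,w)$-chart, the semimartingale $X(s)$ is simply $(\theta(s),w(s))$, hence $\eta_j(\circ dX(s))=d\theta_j(s)+\alpha_j(\circ dw_j(s))$. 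By Definition \ref{def-theta}, $\theta_j(t)=-\int_{w_j[0,t]}\alpha$, so $d\theta_j=-\alpha_j(\circ dw_j)$, and the two contributions cancel. Note that the Stratonovich/Itô ambiguity in the definition of $\theta_j$ is harmless because $w\,d\overline{w}-\overline{w}\,dw$ is purely imaginary and its Itô--Stratonovich correction vanishes in view of Lemma \ref{lemma-qv-dw}.

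If one prefers to avoid invoking the abstract horizontal-lift theorem, a direct alternative is to apply It\^o's formula to $f(X(t))$ for $f\in C^\infty(\mathbf{U}(n))$ using the cylindric parametrization, combining the quadratic covariations of Lemma \ref{lemma-qv-dw} with those of $\theta$ derived from \eqref{eq-dwdw*-1}. The resulting second-order operator can then be matched against $\tfrac12\Delta_{\mathcal{H}}$ expressed through the horizontal lifts of the base vector fields $\partial_{w_{kj}}$, $\partial_{\overline{w}_{kj}}$. The main obstacle in this direct approach is the bookkeeping of the $\theta$--$w$ cross-terms and of the quadratic-variation drift produced by $d\theta_j=-\alpha_j(\circ dw_j)$, and checking that the connection correction in $\eta_j=d\theta_j+\alpha_j$ is exactly what is needed to assemble the horizontal Laplacian. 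The conceptual point, however, is clean: the stochastic area $\theta$ is constructed precisely as the antiderivative of $-\alpha(\circ dw)$, so $(\theta(t),w(t))$ is by design the horizontal lift of $w(t)$ in the cylindric chart.
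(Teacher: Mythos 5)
Your proposal is correct and follows essentially the same route as the paper: verify that $p(X(t))=w(t)$ via the cylindric parametrization, and verify horizontality by writing $\eta_j=d\theta_j+\alpha_j$ as in \eqref{eq-contact-eta-sphere} and observing that the definition of $\theta_j$ makes $\int_{X[0,t]}\eta_j$ vanish, then invoke the characterization of horizontal Brownian motion for submersions with totally geodesic fibers (the paper cites \cite[Theorem 3.1.10]{book} for exactly this). The alternative direct It\^o computation you sketch is not needed and is not what the paper does here.
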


\begin{proof}
To prove that $X$ is a horizontal Brownian motion, one needs to prove the following two properties (see \cite[Theorem 3.1.10]{book}): 
\begin{itemize}
\item[(i)]  it projects down to the Brownian motion on 
$F_{1,2,\dots ,n-1}(\mathbb C^n)$; 
\item[(ii)]  it is a horizontal process. 
\end{itemize}
The first property follows directly from the definition of the Riemannian submersion 
$p$. By applying equation \eqref{submersion p:eq2}, we obtain:
\[
p(X(t))=(w_1(t), \dots, w_n(t)).
\]
As for the second property, it follows from the decomposition \eqref{eq-contact-eta-sphere}:
\begin{align*}
\int_{X[0,t]} \eta_j&=\theta_j(t)+\frac{i}{2}\int_{X[0,t]}\frac{\sum_{k=1}^{n-1}(w_{kj}d\overline{w}_{kj}-\overline{w}_{kj}dw_{kj})}{(1+|w_j|^2)} \\
 &=\theta_j(t)-\frac{1}{2i}\sum_{k=1}^{n-1} \int_0^t \frac{w_{kj}(s)  d\overline{w}_{kj}(s)-\overline{w}_{kj}(s) dw_{kj}(s)}{1+|w_j(s)|^2} \\
  &= 0,
\end{align*}
where the last equality holds by the definition of the process $(\theta_j(t))_{t\geq 0}$.
\end{proof}

\begin{theorem}\label{horizontal Laplace}
 Let $w(t)$ be a Brownian motion on $F_{1,2,\dots ,n-1}(\mathbb C^n)$, and let $\theta(t)$ be its stochastic area process as defined in Definition \ref{def-theta}.  The process $(w(t),\theta(t))$ is a diffusion with generator
    \begin{align*}
        &2 \sum_{j=1}^n\sum_{p,q=1}^{n-1}(\delta_{pq} +w_{pj}\overline{w}_{qj})(1+|w_j|^2)
        \frac{\partial^2}{\partial w_{pj}\partial\overline{w}_{qj}}
        +i\sum_{j=1}^{n}\sum_{p=1}^{n-1}(1+|w_j|^2 )\left( \overline{w}_{pj}\frac{\partial^2}{\partial\overline{w}_{pj}\partial\theta_j} -w_{pj}\frac{\partial^2}{\partial w_{pj} \partial\theta_j}\right) \\
        %-&i\sum_{1\le j\neq m\le n}\frac{1}{1+|w_j|^2}\left( (w_m\cdot\overline{w}_j-|w_j|^2)(w_{pm}-w_{pj})\frac{\partial^2}{\partial\theta_j\partial w_{pm}} -(\overline{w}_{pm}-\overline{w}_{pj})(\overline{w}_m\cdot w_j-|w_j|^2)\frac{\partial^2}{\partial\theta_j\partial\overline{w}_{pm}}\right) \\
        -& \sum_{p,q=1}^{n-1}\sum_{1\leq j\neq m\leq n} \left\{
        (w_{pm}-w_{pj})(w_{qj}-w_{qm})\frac{\partial^2}{\partial w_{pj}\partial w_{qm}}+(\overline{w}_{pm}-\overline{w}_{pj})(\overline{w}_{qj}-\overline{w}_{qm})\frac{\partial^2}{\partial \overline{w}_{pj}\partial\overline{w}_{qm}} \right\}
        \\
        +&i\sum_{1\leq j\neq m\leq n}\sum_{k=1}^{n-1}\left((w_{kj}-w_{km})\frac{\partial^2}{\partial w_{km}\partial\theta_j}
        -(\overline{w}_{kj}-\overline{w}_{km})\frac{\partial^2}{\partial \overline{w}_{km}\partial\theta_j}\right)
        + \frac{1}{2}\sum_{j=1}^{n}|w_j|^2\frac{\partial^2}{\partial\theta^2_j} +\frac{1}{2}\sum_{1\le j\neq m\le n}\frac{\partial^2}{\partial\theta_j\partial\theta_m} . 
    \end{align*}
\end{theorem}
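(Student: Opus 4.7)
The plan is to apply the complex Itô formula to a test function $f(w,\bar w,\theta)$ and extract the second-order coefficients, which together constitute the generator. Two observations make the computation clean: by Lemma \ref{lemma:expression_wt} the process $w$ has no drift in these coordinates, and by Definition \ref{def-theta} the area process $\theta$ is defined equivalently in Itô or Stratonovich form, so $\theta$ is also a local martingale (the Stratonovich corrections coming from the prefactor $1/(1+|w_j|^2)$ and from the integrand $w_{kj}$ cancel exactly). Consequently the generator of $(w,\theta)$ is purely second order, and its coefficients are read directly off the mutual quadratic variations.

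The pure $w$-brackets are already given by Lemma \ref{lemma-qv-dw}, and, fed into Itô's formula, they reproduce the three groups of the stated generator that do not involve $\theta$ (these are exactly the content of Proposition \ref{Laplacian full flag}). What remains is to compute the mixed brackets $dw_{pj}\,d\theta_m$ and $d\bar w_{pj}\,d\theta_m$, and the pure brackets $d\theta_j\,d\theta_m$. Substituting
\[
d\theta_m = \frac{1}{2i(1+|w_m|^2)}\sum_{k=1}^{n-1}\bigl(w_{km}\,d\bar w_{km} - \bar w_{km}\,dw_{km}\bigr)
\]
reduces every such bracket to a linear combination of the brackets in Lemma \ref{lemma-qv-dw}, and a case split $j=m$ versus $j\neq m$ is essential and dualizes nicely. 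When $j=m$ the bracket $dw_{pj}\,dw_{km}$ vanishes (take $\ell=j$ in \eqref{eq-dw-quad-2}) while the mixed $dw_{pj}\,d\bar w_{km}$ survives; when $j\neq m$ the situation reverses. After applying the orthogonality relations $\sum_k \bar w_{km}w_{kj} = -1$ (from \eqref{eq-orth-w}, for $j\neq m$) and $\sum_k|w_{kj}|^2 = r_j^2$, the double sums collapse.

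I expect to obtain $dw_{pj}\,d\theta_j = -iw_{pj}(1+|w_j|^2)\,dt$, $dw_{pj}\,d\theta_m = i(w_{pj}-w_{pm})\,dt$ for $j\neq m$, conjugate formulas for $d\bar w_{pj}\,d\theta_m$, and $d\theta_j^2 = |w_j|^2\,dt$ together with $d\theta_j\,d\theta_m = dt$ for $j\neq m$. Feeding these into Itô's formula—with the convention that the coefficient of a mixed derivative $\partial_w\partial_{\bar w}$ or $\partial_w\partial_\theta$ is the full bracket, and that of a pure second derivative $\partial_\theta^2$ is half the bracket—produces the six groups of terms in the stated generator.

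The main obstacle is bookkeeping: tracking signs, factors of $i$, and the $j=m$ versus $j\neq m$ dichotomy across many terms. There is no conceptual difficulty, as the orthogonality relations of $\mathbf U(n)$ and identity \eqref{OR1} do all the algebraic work; the only potentially delicate point—that the drift of $\theta$ vanishes—is already built into the Itô/Stratonovich equivalence recorded in Definition \ref{def-theta}.
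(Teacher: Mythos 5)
Your plan is the same as the paper's own proof: the paper also observes that both $w$ and $\theta$ are driftless, takes the brackets of $w$ from Lemma \ref{lemma-qv-dw}, computes $dw_{kj}\,d\theta_m$ and $d\theta_j\,d\theta_m$ from the defining formula for $d\theta$, and reads the generator off It\^o's formula. Your diagonal bracket $dw_{pj}\,d\theta_j=-i\,w_{pj}(1+|w_j|^2)\,dt$ and your $\theta$-brackets $d\theta_j^2=|w_j|^2\,dt$, $d\theta_j\,d\theta_m=dt$ ($j\neq m$) agree with the paper's computation and with the stated generator.

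However, your anticipated off-diagonal mixed bracket has the wrong sign, and as written it would not reproduce the fourth group of terms in the theorem. Carrying out the substitution for $j\neq m$: only the $dw_{pj}\,dw_{km}$ terms survive, and \eqref{eq-dw-quad-2} gives $dw_{pj}\,dw_{km}=-(w_{pm}-w_{pj})(w_{kj}-w_{km})(2dt)$, so
\begin{equation}
dw_{pj}\,d\theta_m=\frac{w_{pm}-w_{pj}}{i\,(1+|w_m|^2)}\sum_{k=1}^{n-1}\overline{w}_{km}\bigl(w_{kj}-w_{km}\bigr)\,dt
=\frac{w_{pm}-w_{pj}}{i\,(1+|w_m|^2)}\bigl(-1-|w_m|^2\bigr)\,dt
=i\,(w_{pm}-w_{pj})\,dt,
\end{equation}
using \eqref{eq-orth-w} and $\sum_k|w_{km}|^2=|w_m|^2$; note the orthogonality sum produces $-(1+|w_m|^2)$, which cancels the denominator \emph{and} flips the sign. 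This is the negative of your claimed $i\,(w_{pj}-w_{pm})\,dt$; the correct value (equivalently, $dw_{km}\,d\theta_j=i\,(w_{kj}-w_{km})\,dt$ in the theorem's indexing) is exactly the coefficient of $\partial^2/\partial w_{km}\partial\theta_j$ in the statement, whereas your version would flip the sign of that whole group. One further bookkeeping caution: your rule ``mixed derivatives get the full bracket'' is correct for $\partial_{w}\partial_{\overline w}$ and $\partial_{w}\partial_{\theta}$, but the terms $\partial^2/\partial w_{pj}\partial w_{qm}$ and $\partial^2/\partial\theta_j\partial\theta_m$ in the theorem are summed over \emph{ordered} pairs $j\neq m$, so each such coefficient is half the corresponding bracket; otherwise you double count. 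With the sign corrected and this convention fixed, your argument is exactly the paper's proof.
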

\begin{proof}
Using Lemma \ref{lemma-qv-dw} and the formula
\[
d\theta_j(t)=\frac{1}{2i}\sum_{k=1}^{n-1} \frac{w_{kj}(t)  d\overline{w}_{kj}(t)-\overline{w}_{kj}(t) dw_{kj}(t)}{1+|w_j(t)|^2}
\]
one can compute the quadratic covariations $dw_{kj} d\theta_\ell, d\bar{w}_{kj} d\theta_\ell$ and $d\theta_\ell d\theta_m$.
These are given by
\begin{align*}
    d\theta_j d\theta_m 
    =&\frac{-1}{4(1+|w_j|^2)(1+|w_m|^2)}\left(\sum_{\ell =1}^{n-1} (w_{\ell j}d\overline{w}_{\ell j} -\overline{w}_{\ell j}dw_{\ell j})\right)\left(\sum_{k=1}^{n-1} (w_{km}d\overline{w}_{km} -\overline{w}_{km}dw_{km})\right) \\
    %=&-\frac{1}{4(1+|w_j|^2)(1+|w_m|^2)}\sum_{\ell ,k=1}^{n-1}(w_{\ell j}w_{km}d\overline{w}_{\ell j} d\overline{w}_{km} -w_{\ell j}\overline{w}_{km}d\overline{w}_{\ell j}dw_{km} \\
    %&-\overline{w}_{\ell j}w_{km}dw_{\ell j}d\overline{w}_{km} +\overline{w}_{\ell j}\overline{w}_{km}dw_{\ell j}dw_{km}) 
    %&=\frac{-1}{2(1+|w_j|^2)(1+|w_m|^2)}\sum_{\ell ,k=1}^{n-1}(-w_{\ell j}w_{km}(\overline{w}_{\ell m} -\overline{w}_{\ell j})(\overline{w}_{kj} - \overline{w}_{km}) %\\
    %-&\delta_{jm}\overline{w}_{\ell j}w_{km}(1+|w_m|^2)(\delta_{\ell k} +w_{\ell m}\overline{w}_{kj})
    %-\delta_{jm}\overline{w}_{km}w_{\ell j}(1+|w_j|^2)(\delta_{\ell k} +w_{km}) \\
    %-&\overline{w}_{\ell j}\overline{w}_{km}(w_{\ell m} -w_{\ell j})( w_{kj} - w_{km})dt \\
    =&\frac{1}{2(1+|w_j|^2)(1+|w_m|^2)}\big((w_j\cdot\overline{w}_m -|w_j|^2)(w_m\cdot\overline{w}_j-|w_m|^2) \\
    &\qquad+2\delta_{jm}(|w_j|^2 +|w_j|^4)(1+|w_m|^2)
    +(\overline{w}_j\cdot w_m -|w_j|^2)(w_m\cdot w_j-|w_m|^2)\big) dt
    \\
    =&(1-\delta_{jm}+\delta_{jm}|w_j|^2) dt
\end{align*} 
and
\begin{align*}
    d\theta_j dw_{k m} =&\frac{1}{2i (1+|w_j|^2)}\sum_{\ell =1}^{n-1} (w_{\ell j}d\overline{w}_{\ell j}dw_{k m} -\overline{w}_{\ell j}dw_{\ell j}dw_{k m}) \\
    =&\frac{2}{2i (1+|w_j|^2)}\sum_{\ell =1}^{n-1} (\delta_{mj}w_{\ell j}(1+|w_m|^2)(\delta_{k\ell } +w_{km}\overline{w}_{\ell m}) +\overline{w}_{\ell j}(w_{\ell m} -w_{\ell j})(w_{kj} -w_{km}))dt \\
   % =&\frac{\delta_{jm}}{i}w_{k j}(1 +|\overline{w}_{j}|^2 )dt+\frac{\overline{w}_{j}\cdot w_{m} -|w_{j}|^2}{i (1+|w_j|^2)}(w_{kj} -w_{km})dt\\
    =&\frac{\delta_{jm}}{i}w_{k j}(1 +|\overline{w}_{j}|^2 )dt-\frac{1}{i}(w_{kj} -w_{km})dt.
\end{align*}
This yields the stated formula for the generator.
\end{proof}

\begin{remark}
Since the process \((X(t))_{t \ge 0}\) in Theorem~\ref{skew product unitary} is a horizontal Brownian motion, its generator is \(\frac{1}{2} \Delta_\mathcal{H}\), where \(\Delta_\mathcal{H}\) denotes the horizontal Laplacian defined in~\eqref{horizontal laplacian}. Consequently, the generator computed in Theorem~\ref{horizontal Laplace} represents the expression of \(\frac{1}{2} \Delta_\mathcal{H}\) in the cylindrical parametrization. Moreover, since \(\Delta_\mathcal{H}\) is the horizontal lift of the Laplace--Beltrami operator \(\Delta\) on \(\mathcal{O}\), the generator in Theorem~\ref{horizontal Laplace} can also be derived by lifting the formula obtained in Proposition~\ref{Laplacian full flag}, following an approach similar to that of~\cite[Theorem~5.1.6]{book}.
\end{remark}

\begin{corollary}\label{skew product}
Let $(w(t),\theta(t))$ be the diffusion process  as in Theorem \ref{horizontal Laplace}.  Let $\lambda(t)$ be as defined in \eqref{eq-lambda}.
%\[
%\lambda(t)=\left(\frac{1}{1+r_1(t)^2},\dots,\frac{1}{1+r_{n}(t)^2} \right), \quad t \ge 0,
%\]
%where $r_j(t)=|w_j(t)|^2$. 
Then the joint process $(\lambda(t),\theta(t))$ is a diffusion with generator
 \begin{align*}
  & 2 \sum_{j=1}^{n} \lambda_j(1-\lambda_j) \frac{\partial^2}{\partial \lambda_{j}^2} +2 \sum_{j=1}^{n}(1-n\lambda_j) \frac{\partial}{\partial \lambda_{j}}
 -2\sum_{1 \le j\not=\ell \le n}\lambda_j\lambda_{\ell}  
\frac{\partial^2}{\partial \lambda_{j}\partial \lambda_\ell}  \\
+&\frac{1}{2}\sum_{j=1}^{n}\frac{1-\lambda_j}{\lambda_j} \frac{\partial^2}{\partial\theta_j^2} +\frac{1}{2}\sum_{1 \le i\neq j \le n}\frac{\partial^2}{\partial\theta_i\partial\theta_j} .
\end{align*}

Therefore, for every $t > 0$, conditionally on $(\lambda(s), s \leq t)$, the random vector $\theta(t)$ is Gaussian with mean zero and covariance matrix
    \begin{align}\label{covariance}
       \Sigma (t)=\begin{pmatrix}
            \int_0^t\frac{(1-\lambda_1(s))}{\lambda_1(s)}ds  &\dots & t\\
            \vdots & \ddots & \vdots\\
            t &\dots & \int_0^t\frac{(1-\lambda_n(s))}{\lambda_{n}(s)}ds
        \end{pmatrix}
    \end{align}
%and every $\lambda \in \mathcal{T}_n$.
\end{corollary}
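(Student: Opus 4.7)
My plan is to derive the generator of $(\lambda(t),\theta(t))$ from the already-computed generator of $(w(t),\theta(t))$ in Theorem \ref{horizontal Laplace}, and then read off the conditional Gaussianity from the structure of that generator. Since Theorem \ref{Th1} identifies the marginal generator of $\lambda(t)$ as $2\widehat{\mathcal{G}}_{1/2,\dots,1/2}$, this accounts exactly for the first three terms of the claimed expression; it remains to compute the pure-$\theta$ second-order terms and to verify that no cross derivatives $\partial^2/(\partial \lambda_j \partial \theta_m)$ appear. The $\theta$-part is immediate from the bracket identity $d\theta_j\, d\theta_m = (1-\delta_{jm}+\delta_{jm}|w_j|^2)\,dt$ already established in the proof of Theorem \ref{horizontal Laplace}: substituting $|w_j|^2 = r_j^2 = (1-\lambda_j)/\lambda_j$ produces the claimed coefficients $\tfrac12(1-\lambda_j)/\lambda_j$ on $\partial_{\theta_j}^2$ and $\tfrac12$ on the off-diagonal $\partial_{\theta_i}\partial_{\theta_j}$ terms.

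For the cross terms, I apply It\^o's formula to $\lambda_j = 1/(1+|w_j|^2)$. Since $\theta_m$ is driftless (its defining integrand $w_{km}\,d\bar w_{km}-\bar w_{km}\,dw_{km}$ is antisymmetric in $dw,d\bar w$), only the martingale parts contribute to the cross variation, giving
\begin{equation*}
d\lambda_j\, d\theta_m = -\lambda_j^2 \sum_{k=1}^{n-1}\bigl(\bar w_{kj}\, dw_{kj}\, d\theta_m + w_{kj}\, d\bar w_{kj}\, d\theta_m\bigr).
\end{equation*}
Plugging in the brackets $dw_{kj}\,d\theta_m$ and $d\bar w_{kj}\,d\theta_m$ already computed in the proof of Theorem \ref{horizontal Laplace}, and invoking the orthogonality relation \eqref{eq-orth-w} (namely $\sum_k w_{kj}\bar w_{km}=\sum_k \bar w_{kj} w_{km}=-1$ for $j\neq m$), a short calculation shows that the sum vanishes in both cases $m=j$ and $m\neq j$. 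This establishes the generator formula, and as a byproduct implies that the martingale part of $\theta$ is strongly orthogonal to $\lambda$.

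For the conditional Gaussianity, denote by $\sigma(\lambda)$ the symmetric matrix with $\sigma_{jj}(\lambda)=(1-\lambda_j)/\lambda_j$ and $\sigma_{jm}(\lambda)=1$ for $j\neq m$, and set $\mathcal{F}^\lambda_t := \sigma(\lambda(s),\, s\le t)$. Applying It\^o's formula to $e^{iu^T\theta(t)}$ yields
\begin{equation*}
e^{iu^T\theta(t)} = 1 + M_t - \tfrac12\int_0^t u^T\sigma(\lambda(s))u\, e^{iu^T\theta(s)}\,ds,
\end{equation*}
where $M$ is a local martingale whose bracket with every smooth function of $\lambda$ vanishes by the orthogonality established above. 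Taking the conditional expectation with respect to $\mathcal{F}^\lambda_t$ kills $M_t$, and setting $\psi(t):=E[e^{iu^T\theta(t)}\mid\mathcal{F}^\lambda_t]$ one obtains the linear integral equation
\begin{equation*}
\psi(t)=1-\tfrac12\int_0^t u^T\sigma(\lambda(s))u\,\psi(s)\,ds.
\end{equation*}
Its unique solution is $\psi(t)=\exp(-\tfrac12 u^T\Sigma(t)u)$ with $\Sigma(t)$ as in \eqref{covariance}, which is the characteristic function of a centered Gaussian vector with covariance $\Sigma(t)$. The only subtle step of the whole argument is the justification that $E[M_t\mid\mathcal{F}^\lambda_t]=0$; this is a standard consequence of the strong orthogonality $d\lambda\,d\theta=0$ proved above, and is the main technical point.
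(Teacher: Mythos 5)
Your proposal is correct, and for the generator it takes a genuinely different (and more economical) route than the paper. The paper proves the corollary by letting the operator of Theorem \ref{horizontal Laplace} act on functions of $(r_1,\dots,r_n,\theta_1,\dots,\theta_n)$ only, using chain-rule identities for the radial derivatives together with \eqref{eq-orth-w}, and then changing variables $\lambda_j=1/(1+r_j^2)$; the cancellation of the $\lambda$--$\theta$ cross terms is buried in that chain-rule computation. You instead assemble the joint generator from three ingredients: the marginal $\lambda$-dynamics already established in Theorem \ref{Th1} (which gives exactly the first three terms), the brackets $d\theta_j\,d\theta_m=(1-\delta_{jm}+\delta_{jm}|w_j|^2)\,dt$ from the proof of Theorem \ref{horizontal Laplace}, and a direct verification that $d\lambda_j\,d\theta_m=0$. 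I checked the latter: for $m=j$ the two conjugate contributions $\pm\tfrac{1}{i}(1+r_j^2)r_j^2\,dt$ cancel, and for $m\neq j$ the relation \eqref{eq-orth-w} gives $\sum_k \bar w_{kj}(w_{km}-w_{kj})=-(1+r_j^2)$ and its conjugate, which again cancel; since $\theta$ is driftless (it is an It\^o integral against the local martingale $w$) and all coefficients are functions of $\lambda$ alone, the claimed generator follows. Your route avoids redoing the radial reduction and makes the orthogonality of the martingale parts explicit, which the paper leaves implicit.

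For the conditional Gaussianity (where the paper itself only says the claim ``follows as a direct consequence''), your final step is the one place that needs tightening: $\mathbb{E}[M_t\mid\mathcal{F}^\lambda_t]=0$ is not a formal consequence of strong orthogonality of local martingales, and your integral equation also tacitly uses $\mathbb{E}[e^{iu^T\theta(s)}\mid\mathcal{F}^\lambda_t]=\mathbb{E}[e^{iu^T\theta(s)}\mid\mathcal{F}^\lambda_s]$ for $s\le t$. The standard way to close this is the skew-product argument that your orthogonality makes available: since $d\theta$ has bracket matrix $\sigma(\lambda)\,dt$ and zero bracket with the Brownian motions $\gamma_{\ell j}$ driving $\lambda$ in Theorem \ref{Th1}, set $\beta(t)=\int_0^t\sigma(\lambda(s))^{-1/2}\,d\theta(s)$ (valid for $\lambda$ in the interior of the simplex, where $\sigma(\lambda)$ is invertible); by L\'evy's characterization the pair $(\beta,\gamma)$ is a standard Brownian motion, so $\beta$ is independent of $\gamma$ and hence of $\lambda$, and $\theta(t)=\int_0^t\sigma(\lambda(s))^{1/2}\,d\beta(s)$ is, conditionally on $(\lambda(s),\,s\le t)$, centered Gaussian with covariance \eqref{covariance}. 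Alternatively one may invoke uniqueness of the martingale problem for the computed generator and compare with the explicit skew-product construction. With that substitution your argument is complete.
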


\begin{proof}
We use the formulas
    \begin{align*}
        \frac{\partial r_j}{\partial w_{pj}} =\frac{1}{2}\frac{\overline{w}_{pj}}{r_j} ,
        \quad \frac{\partial r_j}{\partial \overline{w}_{pj}} =\frac{1}{2}\frac{w_{pj}}{r_j}
    \end{align*}
   and apply the chain rule to smooth  radial functions 
   $f:F_{1,\dots ,n-1}(\mathbb{C}^n)\rightarrow\mathbb{R}$. Doing so, we get for any $1 \leq p,q \leq n-1$ and any $1 \leq j\leq n$:
    \begin{align*}
        \frac{\partial^2}{\partial w_{pj}\partial\overline{w}_{qj}}f(r_1,\dots ,r_{n}) 
        =&\frac{\partial}{\partial w_{pj}}\left(\frac{\partial f}{\partial r_j}\frac{w_{qj}}{2r_j}\right)
        =\left(\frac{\delta_{pq}}{2r_j} 
        -\frac{w_{q j}\overline{w}_{pj}}{4r_j^3}\right)\frac{\partial f}{\partial r_j} +\frac{w_{qj}\overline{w}_{pj}}{4r_j^2}
        \frac{\partial^2 f}{\partial r_j^2}
        \\
        %=&\left(\frac{\delta_{\ell k}}{2r_j} -\frac{w_{\ell j}\overline{w}_{kj}}{4r_j^3}\right)\frac{\partial f}{\partial r_j} +\frac{w_{\ell j}\overline{w}_{kj}}{4r_j^2}\frac{\partial^2 f}{\partial r_j^2}
    \end{align*}
    and similarly for any $1 \leq j, m \leq n$, 
    \begin{align*}
        \frac{\partial^2}{\partial w_{pj}\partial w_{q m}}f(r_1,\dots ,r_{n}) 
        %=&\frac{\partial}{\partial w_{kj}}\left(\frac{\partial f}{\partial r_m}\frac{\overline{w}_{\ell m}}{2r_m}\right)
        =\delta_{jm}\left(\frac{\delta_{pq}}{2r_j} -\frac{\overline{w}_{q m}\overline{w}_{pj}}{4r_j^3}\right)\frac{\partial f}{\partial r_j} +\frac{\overline{w}_{q m}\overline{w}_{pj}}{4r_mr_j}\frac{\partial^2 f}{\partial r_j\partial r_m} .
    \end{align*}
    %Consequently, operator in Theorem \ref{Laplacian full flag} acts on functions depending only on $(r_1,\dots,r_n)$ as 
    %\begin{align*}
        %&4\sum_{j =1}^{n}\sum_{k,m=1}^{n-1} (\delta_{kl} +w_{kj}\overline{w}_{\ell j}) (1+r_j^2)\left(\left(\frac{\delta_{lk}}{2r_j} -\frac{w_{\ell j}\overline{w}_{kj}}{4r_j^3}\right)\frac{\partial}{\partial r_j}
        %+\frac{w_{\ell j}\overline{w}_{kj}}{4r_j^2}\frac{\partial^2}{\partial r_j^2}\right)\\
        %+&2\sum_{j,m=1}^{n}\sum_{p,q=1}^{n-1}\left( (w_{pm} -w_{pj})(w_{qm} -w_{qj})\frac{\overline{w}_{qm}\overline{w}_{pj}}{4r_m r_j} %-0
        %+(\overline{w}_{pm} -\overline{w}_{pj})(\overline{w}_{qm} -\overline{w}_{qj}) \frac{ w_{qm} w_{pj}}{4r_m r_j}\right)\frac{\partial^2}{\partial r_j\partial r_m} .
    %\end{align*}

Using the relation \eqref{eq-orth-w}, we see that the operator in Theorem \ref{horizontal Laplace} acts on functions depending only on $(r_1,\dots,r_n,\theta_1,\dots,\theta_n)$ as $1/2$ of the operator
    \begin{align*}
    &\sum_{j=1}^{n}(1+r_j^2)^2\frac{\partial^2}{\partial r_j^2}
    +\sum_{j=1}^{n}\frac{1+r_j^2}{r_j}\left( 2n-3+r_j^2\right)\frac{\partial}{\partial r_j}
    -\sum_{1 \le j\neq m\le n}\frac{(1+r_j^2)(1+r_m^2)}{r_jr_m}\frac{\partial^2}{\partial r_j\partial r_m} \\
    +&\sum_{j=1}^{n}r_j^2\frac{\partial^2}{\partial\theta_j^2} +\sum_{1 \le i\neq j \le n}\frac{\partial^2}{\partial\theta_i\partial\theta_j} .
\end{align*}
Performing the change of variables, 
\begin{equation*}
\lambda_j = \frac{1}{1+r_j^2} \, \Leftrightarrow \, r_j^2 = \frac{1-\lambda_j}{\lambda_j}, \quad 1 \leq j \leq n,     
\end{equation*}
we obtain the first claim of the corollary.
The second claim then follows as a direct consequence.  
%  \begin{align*}
%   & 4 \sum_{j=1}^{n} \lambda_j(1-\lambda_j) \frac{\partial^2}{\partial \lambda_{j}^2} +4 \sum_{j=1}^{n}(1-n\lambda_j) \frac{\partial}{\partial \lambda_{j}}
%  -4\sum_{1 \le j\not=\ell \le n}\lambda_j\lambda_{\ell}  
% \frac{\partial^2}{\partial \lambda_{j}\partial \lambda_\ell}  +\sum_{j=1}^{n}\frac{1-\lambda_j}{\lambda_j} \frac{\partial^2}{\partial\theta_j^2} -\sum_{1 \le i\neq j \le n}\frac{\partial^2}{\partial\theta_i\partial\theta_j} .
% \end{align*}
\end{proof}

\subsection{Characteristic function of the stochastic area process}
In this section we study the stochastic area processes on the full flag manifold $F_{1,2,\dots ,n-1}(\mathbb C^n)$ and compute the characteristic function of their joint distributions. We begin by recalling the simplex
\begin{align*}
    \mathcal{T}_n :=\{\lambda\in\mathbb{R}^{n}\mid\lambda_j\geq 0, 1\leq j\leq n,\quad\lambda_1 +\dots +\lambda_n =1\}. 
\end{align*}

\begin{theorem}\label{carac area flag}
   Let $(w(t),\theta(t))$ be the diffusion process  as in Theorem \ref{horizontal Laplace}, and $\lambda(t)$ be as given in Corollary \ref{skew product}. For any $u=(u_1 ,\dots ,u_{n})\in \mathbb{R}^n$, any $\lambda(0), \lambda$ in the interior of $\mathcal{T}_{n}$, and any $t>0$, we have
    \begin{align*}
         & \mathbb{E}\left(e^{i\sum_{j=1}^{n} u_j\theta_j(t)}\mid \lambda (t)=\lambda \right) = e^{-(n-1)  \sum_{j=1}^{n}|u_j| t -\frac{1}{2}\sum_{1\leq j\neq m\leq n} (u_j u_m +|u_j u_m|)t } \\
         &\quad\quad\cdot \prod_{j=1}^n\left(\frac{\lambda_j(0)}{\lambda_j}\right)^{\frac{|u_j|}{2}}
         \quad\frac{q^{(1/2+|u_1|,\dots,1/2+|u_n|)}_{2t}(\lambda^{(n-1)}(0),\lambda^{(n-1)})}{q^{(1/2,\dots,1/2)}_{2t}(\lambda^{(n-1)}(0),\lambda^{(n-1)})} \frac{W^{(1/2+|u_1|,\dots,1/2+|u_n|)}(\lambda^{(n-1)}) }{W^{(\frac{1}{2},\dots, \frac{1}{2})}(\lambda^{(n-1)}) } ,
    \end{align*}
    where $q_t^{(\kappa_1 ,\dots ,\kappa_{n})}$ and
    $W^{(\kappa_1 ,\dots ,\kappa_{n})}$ are given by 
    \eqref{kernel jacobi simplex} and \eqref{DenDirich} respectively and $\lambda^{(n-1)}=(\lambda_1,\cdots,\lambda_{n-1})$.
    %(\lambda(0),d\lambda)$ is the semi-group density at $\lambda$ %of the diffusion process on $\mathcal{T}_n$ with generator
    %\begin{align*}
        %\widehat{\mathcal{G}}_{\kappa} =\sum_{j=1}^{n} \lambda_j(1-%\lambda_j)\frac{\partial^2}{\partial\lambda_{j}^2} 
        %+ \sum_{j=1}^{n}\left[\left(\kappa_j+\frac{1}{2}\right) 
        %- \left(|\kappa| +\frac{n}%{2}\right)\lambda_j\right]\frac{\partial}{\partial\lambda_j}
        %-\sum_{1 \leq j \neq \ell \leq n}\lambda_j\lambda_{\ell} %\frac{\partial^2}{\partial\lambda_{j}\partial\lambda_{\ell}}
    %\end{align*}
    %and starting at $\lambda(0)$, and 
    %\begin{equation*}
    %\kappa =(\kappa_1 ,\dots ,\kappa_{n}).
    %\end{equation*}
\end{theorem}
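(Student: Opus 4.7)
The plan is to reduce the problem to a Feynman--Kac functional of the Jacobi process $\lambda(t)$ on $\mathcal{T}_n$ via the conditional Gaussianity established in Corollary \ref{skew product}, and then to evaluate that functional by a ground-state type intertwining which relates the resulting Schr\"odinger operator to the Jacobi generator with shifted parameter $\kappa' := (1/2+|u_1|,\ldots,1/2+|u_n|)$.

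By Corollary \ref{skew product}, conditionally on $(\lambda(s))_{s\leq t}$ the vector $\theta(t)$ is centered Gaussian with covariance matrix $\Sigma(t)$ given in \eqref{covariance}. The tower property together with the Gaussian characteristic function give
\[
\mathbb{E}\bigl(e^{i u\cdot \theta(t)} \mid \lambda(t)=\lambda\bigr) \;=\; \mathbb{E}\bigl(e^{-\frac{1}{2}u^T \Sigma(t) u} \mid \lambda(t)=\lambda\bigr),
\]
and expanding $u^T\Sigma(t)u$ via \eqref{covariance} separates the additive functional $\sum_j u_j^2 \int_0^t ds/\lambda_j(s)$ from a path-independent remainder linear in $t$. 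The remaining conditional expectation is then, by Feynman--Kac, the ratio
\[
\frac{K_t^H(\lambda(0),\lambda)}{q^{(1/2,\ldots,1/2)}_{2t}(\lambda(0),\lambda)\,W^{(1/2,\ldots,1/2)}(\lambda)},
\]
where $K_t^H$ is the Lebesgue kernel of the semigroup $e^{tH}$ for $H := 2\widehat{\mathcal{G}}_{1/2,\ldots,1/2} - \tfrac{1}{2}\sum_j u_j^2/\lambda_j$.

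Setting $g(\lambda) := \prod_{j=1}^{n} \lambda_j^{|u_j|/2}$, the key step is to verify the intertwining
\[
g^{-1} H g \;=\; 2\widehat{\mathcal{G}}_{\kappa'} + C(u),\qquad C(u) = -(n-1)\sum_{j=1}^n |u_j| - \tfrac{1}{2}\Bigl(\sum_{j=1}^n |u_j|\Bigr)^{2},
\]
using the carr\'e du champ $\Gamma$ of $\widehat{\mathcal{G}}_{1/2,\ldots,1/2}$. Here the drift contribution $2\Gamma(g,\cdot)/g = \sum_j (|u_j| - (\sum_\ell |u_\ell|)\lambda_j)\partial_j$ exactly produces the shift from $\widehat{\mathcal{G}}_{1/2,\ldots,1/2}$ to $\widehat{\mathcal{G}}_{\kappa'}$, forcing $\kappa'_j = 1/2+|u_j|$, while the zeroth-order term $2\widehat{\mathcal{G}}_{1/2,\ldots,1/2} g/g$ contributes $\tfrac{1}{2}\sum_j u_j^2/\lambda_j + C(u)$, which cancels the singular potential in $H$. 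The nonnegative exponent $|u_j|/2$ (rather than $\pm u_j/2$) is forced by requiring $g$ to remain bounded near the boundary of $\mathcal{T}_n$, which is needed for the Feynman--Kac representation.

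The intertwining yields $K_t^H(\lambda(0),\lambda) = g(\lambda(0))\,g(\lambda)^{-1}\, e^{C(u)t}\, q^{\kappa'}_{2t}(\lambda(0),\lambda)\, W^{\kappa'}(\lambda)$. Substituting this back and combining with the $t$-linear remainder from the Gaussian step yields the claimed formula, once one checks the elementary exponent identity
\[
-\tfrac{t}{2}\Bigl[\bigl(\sum_j u_j\bigr)^2 - 2\sum_j u_j^2\Bigr] + C(u)\, t \;=\; -(n-1)\sum_j |u_j|\,t \; - \; \tfrac{t}{2}\sum_{j\neq m}(u_j u_m + |u_j u_m|),
\]
which follows from $\sum_{j\neq m} u_j u_m = (\sum_j u_j)^2 - \sum_j u_j^2$ and $\sum_{j\neq m}|u_j u_m| = (\sum_j |u_j|)^2 - \sum_j u_j^2$. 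The main obstacle is the intertwining computation itself: the cross-term $-\sum_{j\neq \ell}\lambda_j\lambda_\ell\,\partial_j\partial_\ell$ of $\widehat{\mathcal{G}}_{1/2,\ldots,1/2}$ applied to $g$ must produce only constant and diagonal $1/\lambda_j$ contributions (no mixed singularities $1/(\lambda_j\lambda_\ell)$), so that the only surviving singular term matches $\tfrac{1}{2}\sum_j u_j^2/\lambda_j$ and the Schr\"odinger potential is cleanly absorbed. This is precisely what makes the ansatz $g = \prod_j \lambda_j^{|u_j|/2}$ succeed.
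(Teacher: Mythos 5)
Your proposal is correct, and its skeleton is the same as the paper's: reduce via the conditional Gaussianity of Corollary \ref{skew product} to a Feynman--Kac functional of the Jacobi process $\lambda$, and then use precisely the function $g(\lambda)=\prod_j\lambda_j^{|u_j|/2}$ to relate that functional to the Jacobi semigroup with shifted parameters $(1/2+|u_1|,\dots,1/2+|u_n|)$; your intertwining constant $C(u)$, drift shift, and final exponent identity all check out against the paper's formulas. The only genuine difference is how the key step is implemented: the paper phrases the $h$-transform probabilistically, showing that $D_t^u=e^{-C(u)t}\,g(\lambda(t))g(\lambda(0))^{-1}e^{-\frac12\sum_j u_j^2\int_0^t\frac{1-\lambda_j}{\lambda_j}ds}$ is a (bounded, hence true) martingale and then using Girsanov to read off from the SDE of Theorem \ref{Th1} that under the new measure $\lambda$ is exactly the $\kappa'$-Jacobi diffusion, which immediately identifies $s^{(u)}_t$ with the $\kappa'$-kernel; you instead conjugate at the level of generators, $g^{-1}Hg=2\widehat{\mathcal{G}}_{\kappa'}+C(u)$, and transfer this to an identity of kernels. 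The probabilistic route buys the kernel identification for free once the martingale property is checked, whereas your operator route is computationally cleaner (the carr\'e du champ calculation replaces the Girsanov drift computation) but requires one more justification you only gesture at: that the formal conjugation identity really determines the Feynman--Kac kernel $K_t^H$, e.g.\ by symmetry/uniqueness of the associated semigroup on $L^2(W^{\kappa'})$ or, equivalently, by observing that the intertwining plus boundedness of $g$ and nonpositivity of the potential is exactly the statement that the paper's $D_t^u$ is a true martingale. With that point made explicit, the two arguments are equivalent formulations of the same Doob $h$-transform.
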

\begin{proof}
From Corollary \ref{skew product} we know that conditioned on 
$(\lambda (s), s \leq t)$ the winding  $ \theta(t) =
(\theta_1(t) ,\dots ,\theta_{n}(t))$ is a  Gaussian variable with mean zero and covariance matrix $\Sigma(t)$ given by \eqref{covariance}. It  follows that:
\begin{align*}
        \mathbb{E}\left( e^{i\sum_{j=1}^n u_j \theta_j(t)}\mid\lambda (t)=\lambda\right) =&\mathbb{E}\left(e^{-\frac{1}{2} u^T\Sigma(t) u}
        \mid\lambda (t) =\lambda \right) \\
        =&\mathbb{E}\left(\exp\left(-\frac{1}{2}\sum_{j=1}^{n} u_j^2\int_0^t\frac{1-\lambda_j (s)}{\lambda_j (s)}ds -\frac{1}{2}\sum_{1\leq j\neq m\leq n} u_j u_m t\right)\biggm|\lambda (t) =\lambda\right) .
    \end{align*}
It only remains to derive the expression of 
\begin{equation}\label{eq-mid-goal}
    \mathbb{E}\left(\exp\left(-\frac{1}{2}\sum_{j=1}^{n} u_j^2\int_0^t\frac{1-\lambda_j (s)}{\lambda_j (s)}ds \right)\bigg|\lambda (t) =\lambda\right).
    \end{equation}
%    with the help of the heat kernel semi-group 
 %   $\widehat{q}_t^{(\kappa_1 ,\dots ,\kappa_{n})}$.
%Since the expression only involves $u_j^2$ one can assume $u_j \ge 0$ (otherwise, just replace $u_j$ with $|u_j|$ in the computation below).
%For simplicity we will assume $u_j \ge 0$ (for the general case, just replace $u_j$ with $|u_j|$ in the computation below).
To this end, we define the following function on $\mathcal T_n$:
\begin{align*}
    f(\lambda_1 ,\dots ,\lambda_{n}) :=\prod_{j=1}^{n}\lambda_j^{\frac{|u_j|}{2}}.
\end{align*}
Let $\widehat{\mathcal{G}}_{1/2,\dots,1/2}$ be a Jacobi operator on $\mathcal T_n$ as in \eqref{GenJacSim1}, with $\kappa=(1/2,\dots,1/2)$. Applying $\widehat{\mathcal{G}}_{1/2,\dots,1/2}$ to $f$ we obtain
\begin{align*}
    \widehat{\mathcal{G}}_{1/2,\dots,1/2} f=&\sum_{j=1}^{n} \lambda_j(1-\lambda_j)\frac{\partial^2 f}{\partial\lambda_{j}^2} 
    + \sum_{j=1}^{n}\left( 1 
    - n\lambda_j\right)\frac{\partial f}{\partial\lambda_j}
    -\sum_{1 \leq j \neq \ell \leq n}\lambda_j\lambda_{\ell} \frac{\partial^2 f}{\partial\lambda_{j}\lambda_{\ell}} \\
    =&\sum_{j=1}^{n}\frac{|u_j|}{2}\left(\frac{|u_j|}{2} -1\right)\frac{1-\lambda_j }{\lambda_j} f 
    + \sum_{j=1}^{n}\frac{|u_j|}{2}\left( \frac{1}{\lambda_j} 
    - n\right) f
    -\sum_{1 \leq j \neq \ell \leq n}\frac{|u_j u_{\ell} |}{4} f,
\end{align*}
which implies that $f$ is an eigenfunction of the operator 
\begin{equation*}
\widehat{\mathcal{G}}_{1/2,\dots,1/2} -\frac{1}{4}\sum_{j=1}^{n}u_j^2\frac{1-\lambda_j}{\lambda_j}    
\end{equation*} 
associated with the eigenvalue
\begin{align*}
    -\frac{(n-1)}{2}\sum_{j=1}^{n}|u_j| -\frac{1}{4}\sum_{1\leq i\neq j\leq n}|u_i u_j| .
\end{align*}
From Theorem \ref{Th1} we know that $(\lambda (t))_{t\geq 0}$ is a Jacobi process in the simplex $\mathcal{T}_n$ with generator $2\widehat{\mathcal{G}}_{1/2,\dots,1/2}$, It\^o's formula shows that the process
    \begin{align*}
        D_t^{u} :=e^{(n-1)\sum_{j=1}^{n}|u_j| t +\frac{1}{2}\sum_{1\leq j\neq m\leq n} |u_ju_m| t}\left(\prod_{j=1}^n\left(\frac{\lambda_j(t)}{\lambda_j(0)}\right)^{\frac{|u_j|}{2}} e^{-\frac{u^2_j}{2}\int_0^t\frac{1-\lambda_j (s)}{\lambda_j (s)}ds}\right)
    \end{align*}
is a local martingale. One can easily verify that $$D_t^{u}\leq \frac{e^{(n-1)\sum_{j=1}^{n} |u_j| t +\frac{1}{2}\sum_{j\neq m} |u_ju_m| t}}{\prod_{j=1}^n\lambda_j(0)^{\frac{|u_j|}{2}}},$$ which implies that $ D_t^{u}$ is in fact a martingale.
As a matter of fact, we may define a new probability measure $\mathbb{P}^{u}$ by setting for any $t > 0$, $d\mathbb{P}^{u} :=D_t^{u}d\mathbb{P}$. 
We then have for every bounded Borel function $F$ that
    \begin{align*}
      &  \mathbb{E}\left( F(\lambda_1(t),\dots ,\lambda_{n}(t))e^{-\frac{1}{2}\sum_{j=1}^{n} u^2_j\int_0^t\frac{1-\lambda_j(s)}{\lambda_j (s)}ds}\right)\\
        =&e^{-(n-1)\sum_{j=1}^{n}|u_j| t -\frac{1}{2}\sum_{1\leq j\neq m\leq n}|u_ju_m| t} \prod_{j=1}^n\lambda_j(0)^{\frac{|u_j|}{2}}\mathbb{E}^{u}\left(\frac{F(\lambda_1(t),\dots ,\lambda_{n}(t))}{\prod_{j=1}^{n}\lambda_j (t)^{\frac{|u_j|}{2}}}\right) .
    \end{align*}
 Let  $s_t^{(u)}(\lambda(0),d\lambda)$ denote the probability distribution of $\lambda(t)$ 
under $\mathbb{P}^u$, and  $\widehat{q}^{1/2,\dots,1/2}_{2t}(\lambda(0),d\lambda)$ the probability distribution of $\lambda(t)$ under $\mathbb{P}$.  The above equality then implies
\begin{align}\label{law change}
& \mathbb{E}\left( e^{-\frac{1}{2}\sum_{j=1}^{n} u^2_j\int_0^t\frac{1-\lambda_j(s)}{\lambda_j (s)}ds} \biggm|\lambda (t) =\lambda\right)\widehat{q}^{(1/2,\dots,1/2)}_{2t}(\lambda(0),d\lambda) \notag \\
= &e^{-(n-1)\sum_{j=1}^{n}|u_j| t -\frac{1}{2}\sum_{1\leq j\neq m\leq n}|u_ju_m| t} \prod_{j=1}^n\left(\frac{\lambda_j(0)}{\lambda_j}\right)^{\frac{|u_j|}{2}} s_t^{(u)}(\lambda(0),d\lambda).
\end{align}
Comparing to \eqref{eq-mid-goal} we are left to compute $s_t^u(\lambda(0),d\lambda)$, using the Girsanov theorem. Recall the stochastic differential equation satisfied by $\lambda(t)$ as in Theorem \ref{Th1}.
% To proceed, recall that the generator of the process $(\lambda (t))_{t\geq 0}$ is given by
%\begin{align*}
 % & 2 \sum_{j=1}^{n} \lambda_j(1-\lambda_j) \frac{\partial^2}{\partial \lambda_{j}^2} +2 \sum_{j=1}^{n}(1-n\lambda_j) \frac{\partial}{\partial \lambda_{j}}
 %-2\sum_{1 \le j\not=\ell \le n}\lambda_j\lambda_{\ell}  
%\frac{\partial^2}{\partial \lambda_{j}\partial \lambda_\ell},
%\end{align*}
%and note that for any $1 \leq j \leq n$,
%\begin{equation*}
 %(1-\lambda_j) = \sum_{l\neq j}\lambda_l.    
%\end{equation*}
 %   As a matter of fact, $(\lambda (t))_{t\geq 0}$ satisfies the stochastic differential equation: 
  %  \begin{align*}
   %     d\lambda_j =2(1-n\lambda_j) dt+2 \sum_{\ell=1, \ell \neq j}^n  \sqrt{ \lambda_{\ell }\lambda_j} d\gamma_{\ell j}, \quad 1 \le j \le n,
    %\end{align*}
    %where $((\gamma_{\ell j}(t))_{\ell < j})_{t\geq 0}$ is a Brownian motion on $\mathbb{R}^{\frac{1}{2}n(n-1)}$ and $\gamma_{\ell j}=-\gamma_{j\ell}$. 
    By It\^o's formula we have
\begin{align*}
    d\ln (\lambda_j (t)) =&\frac{d\lambda_j(t)}{\lambda_j(t)} -\frac{1}{2}\frac{d\lambda_j(t)d\lambda_j (t)}{\lambda_j(t)^2}\\
    =&\frac{2}{\sqrt{\lambda_j(t)}}\sum_{\ell=1, \ell\neq j}^{n}d\gamma_{\ell j}(t)\sqrt{\lambda_{\ell}(t)} +2\left(\frac{1}{\lambda_j(t)}-n\right) dt -\frac{2}{\lambda_j(t)}\sum_{\ell =1,\ell\neq j}^n\lambda_{\ell} (t)dt .
\end{align*}
 Plugging in $1-\lambda_j = \sum_{l\neq j}\lambda_l$ we then obtain that   
 \begin{align*}
   % &\frac{2}{\sqrt{\lambda_j(t)}}\sum_{\ell=1, \ell\neq j}^{n}d\gamma_{\ell j}(t)\sqrt{\lambda_{\ell}(t)} +2\left(\frac{1}{\lambda_j(t)}-n\right) dt -\frac{2}{\lambda_j(t)}(1-\lambda_j(t))dt \\
   d\ln (\lambda_j (t)) =\frac{2}{\sqrt{\lambda_j(t)}}\sum_{\ell=1, \ell\neq j}^{n}d\gamma_{\ell j}(t)\sqrt{\lambda_{\ell}(t)} -2(n-1)dt.
    \end{align*}
    This gives
    \begin{align*}
    \prod_{j=1}^n\lambda_j(t)^{\frac{|u_j|}{2}}=\exp \left( \sum_{j=1}^n\sum_{\ell=1, \ell\neq j}^{n} \frac{|u_j|}{\sqrt{\lambda_j(t)}}\int_0^t  \sqrt{\lambda_{\ell}(s)} d\gamma_{\ell j}(s)  \right) \exp \left( -(n-1)\left(\sum_{i=1}^n |u_i|\right)t\right).
    \end{align*}
Now define the stochastic processes $\tilde{\gamma}_{\ell j}(t)$ by
 \begin{align*}
        d\tilde{\gamma}_{\ell j}(t) :=d\gamma_{\ell j}(t) -\Theta_{\ell j}(t) dt 
    \end{align*}
for $\ell<j$ and set $\tilde{\gamma}_{j\ell}(t)=-\tilde{\gamma}_{\ell j}(t)$, where
\begin{align*}
    \Theta_{\ell j}(t) :=
        \frac{|u_j|}{\sqrt{\lambda_j(t)}}\sqrt{\lambda_{\ell} (t)} -\frac{|u_{\ell}|}{\sqrt{\lambda_{\ell} (t)}}\sqrt{\lambda_{j} (t)} . %\quad  \textrm{ if } \ell < j,
\end{align*}
By Girsanov's theorem the process $\tilde{\gamma}_{\ell j}(t)$    
is a Brownian motion under $\mathbb{P}^{u}$.
This means that under $\mathbb{P}^{u}$, $(\lambda_1(t),\dots ,\lambda_{n}(t))_{t\geq 0}$ satisfies the stochastic differential equation
\begin{align*}
     d\lambda_j(t) =&2\sqrt{\lambda_j(t)}\sum_{\ell=1, \ell\neq j}^{n}d\tilde{\gamma}_{\ell j}(t)\sqrt{\lambda_{\ell} (t)} 
    +2(1-n\lambda_j(t))dt\\
    %-4\sum_{1\leq\ell\neq j\leq n-1} u_j\lambda_{\ell}(t) dt
    +&2\sum_{1\leq\ell <j}(|u_{j}|\lambda_{\ell} (t) -|u_{\ell}|\lambda_{j} (t))dt
    -2\sum_{ j <\ell\leq n}(|u_{\ell}|\lambda_j (t) -|u_{j}|\lambda_{\ell} (t))dt\\
    =&2\sqrt{\lambda_j(t)}\sum_{\ell=1, \ell\neq j}^{n}d\tilde{\gamma}_{\ell j}(t)\sqrt{\lambda_{\ell}(t)} 
    +2(1-n\lambda_j(t) -|u|\lambda_j (t) +|u_j|)dt ,
\end{align*}
where we used that $\sum_{\ell =1}^n\lambda_j=1$ and the notation $|u|:=\sum_{i=1}^n |u_i|$.
In particular, the generator of $\lambda(t)$ under $\mathbb{P}^u$ is given by a Jacobi operator on $\mathcal{T}_{n}$:
\begin{align*}
    &2\sum_{j=1}^{n} \lambda_j(1-\lambda_j)\frac{\partial^2}{\partial\lambda_{j}^2} 
        +2\sum_{j=1}^{n}\left[\left(1+|u_j|\right) 
        - \left( n+|u|\right)\lambda_j\right]\frac{\partial}{\partial\lambda_j} -2\sum_{1\leq j\neq\ell\leq n}\lambda_j\lambda_{\ell} \frac{\partial^2}{\partial\lambda_{j}\lambda_{\ell}}
        .
\end{align*}
The conclusion then follows from  \eqref{eq-mid-goal}, \eqref{law change} and Section \ref{section Jacobi}.
\end{proof}

\subsection{Limit theorem}

We are now ready to prove the limit theorem for the asymptotics of the stochastic area.

\begin{theorem}\label{limit stochastic area}
Let $(w(t),\theta(t))$ be the diffusion process  as in Theorem \ref{horizontal Laplace}. Then the following convergence holds in distribution
 \[
 \frac{\theta(t)}{t} \to \left( C^1_{n-1},\dots,C^n_{n-1}\right)\textrm{ as }t \to +\infty ,
 \]
 where  $C^1_{n-1},\dots,C^n_{n-1}$ are independent Cauchy random variables with parameter $n-1$.
\end{theorem}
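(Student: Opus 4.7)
The plan is to extract the limit from the characteristic function already computed in Theorem \ref{carac area flag}, combined with the ergodicity of the Jacobi diffusion on the simplex. By L\'evy's continuity theorem, the statement is equivalent to
$$\lim_{t\to\infty}\mathbb{E}\bigl[e^{i\sum_{j=1}^n u_j\theta_j(t)/t}\bigr] = e^{-(n-1)\sum_{j=1}^n |u_j|}, \quad u \in \mathbb{R}^n,$$
the right-hand side being the joint characteristic function of $n$ independent Cauchy$(n-1)$ variables. First I would integrate the conditional formula of Theorem \ref{carac area flag} against the unconditional law of $\lambda(t)$ --- which, by Theorem \ref{Th1}, has density $q_{2t}^{(1/2,\dots,1/2)}(\lambda(0),\cdot)\,W^{(1/2,\dots,1/2)}$ on $\mathcal T_n$. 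The reference kernel and reference weight cancel with the denominator in Theorem \ref{carac area flag}, producing the clean identity
$$\mathbb{E}\bigl[e^{iu^T\theta(t)}\bigr] = e^{-(n-1)|u|t - \tfrac{1}{2}\sum_{j\neq m}(u_ju_m+|u_ju_m|)t}\prod_j \lambda_j(0)^{|u_j|/2}\,J_t(u),$$
where $|u| := \sum_j |u_j|$ and
$$J_t(u) := \int_{\mathcal T_n}\prod_{j=1}^n \lambda_j^{-|u_j|/2}\,q_{2t}^{(1/2+|u_1|,\dots,1/2+|u_n|)}(\lambda(0),\lambda)\,W^{(1/2+|u_1|,\dots,1/2+|u_n|)}(\lambda)\,d\lambda.$$

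Next I would substitute $u \mapsto u/t$. The cross term becomes $\tfrac{1}{2t}\sum_{j\neq m}(u_ju_m+|u_ju_m|) = O(1/t)$, the prefactor $\prod_j \lambda_j(0)^{|u_j|/(2t)}$ tends to $1$, and only $J_t(u/t) \to 1$ remains to be shown. Setting $\kappa_t := (1/2 + |u_1|/t, \dots, 1/2 + |u_n|/t)$, the spectral expansion \eqref{kernel jacobi simplex} writes $q_{2t}^{(\kappa_t)}(\lambda(0),\lambda) = 1 + R_t(\lambda)$ where $R_t$ collects eigenmodes weighted by $e^{-2t|\tau|(|\tau|+|\kappa_t|+(n-2)/2)}$ for $|\tau|\geq 1$. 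The constant mode yields the explicit Dirichlet beta integral
$$\int_{\mathcal T_n}\prod_j \lambda_j^{-|u_j|/(2t)}W^{(\kappa_t)}(\lambda)\,d\lambda = \frac{\Gamma(n+|u|/t)\prod_j\Gamma(1+|u_j|/(2t))}{\Gamma(n+|u|/(2t))\prod_j\Gamma(1+|u_j|/t)} \longrightarrow 1$$
by continuity of $\Gamma$. The contribution of $R_t$ is controlled by Cauchy--Schwarz in $L^2(W^{(\kappa_t)})$: one factor is another explicit Dirichlet integral which remains bounded, while Parseval gives $\|R_t\|_{L^2(W^{(\kappa_t)})}^2 = \sum_{|\tau|\geq 1}e^{-4t|\tau|(|\tau|+|\kappa_t|+(n-2)/2)}P_\tau^{(\kappa_t)}(\lambda(0))^2$, which is exponentially small in $t$. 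Combining the three limits produces $\mathbb{E}[e^{iu^T\theta(t)/t}] \to e^{-(n-1)|u|}$; the factorisation $e^{-(n-1)|u|} = \prod_j e^{-(n-1)|u_j|}$ simultaneously establishes asymptotic independence.

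The main obstacle is the exponential smallness of $\|R_t\|_{L^2(W^{(\kappa_t)})}$: it requires polynomial-in-$|\tau|$ bounds on the Koornwinder polynomials $P_\tau^{(\kappa_t)}(\lambda(0))$ with constants uniform in $\kappa_t$ as $t$ varies in $[1,\infty)$. Since $\kappa_t$ stays in a compact set and the weight $W^{(\kappa)}$ depends smoothly on $\kappa$, such bounds follow from standard asymptotics for multivariable orthogonal polynomials, and the genuinely quadratic eigenvalue growth from \eqref{eq-eigenv-prelim} absorbs any such polynomial factor. A clean alternative that sidesteps spectral estimates is to recognise $J_t(u/t) = \mathbb{E}^{u/t}\bigl[\prod_j \lambda_j(t)^{-|u_j|/(2t)}\bigr]$ via the Girsanov change of measure $\mathbb{P}^{u/t}$ appearing in the proof of Theorem \ref{carac area flag} --- under which $\lambda$ is a Jacobi diffusion with parameter $\kappa_t$ --- and to invoke geometric ergodicity with a uniform spectral gap for the compact family $\{\kappa_t : t \geq 1\}$.
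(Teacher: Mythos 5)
Your proposal is correct and follows essentially the same route as the paper: plug $u/t$ into the conditional characteristic function of Theorem \ref{carac area flag}, integrate against the law of $\lambda(t)$ so that the reference kernel and weight cancel, and show via the spectral expansion \eqref{kernel jacobi simplex} that the remaining integral tends to $1$, leaving the Cauchy characteristic function $e^{-(n-1)\sum_j|u_j|}$. The only difference is that you spell out the control of the non-constant eigenmodes (constant-mode Beta integral, Cauchy--Schwarz/Parseval, uniformity in $\kappa_t$), a step the paper compresses into an appeal to dominated convergence.
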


\begin{proof}
Let $u_1,\dots,u_n \in \mathbb{R}$. From Theorem \ref{carac area flag} one has

\begin{align*}
         & \mathbb{E}\left(e^{i\sum_{j=1}^{n} u_j\theta_j(t)} \right)
        = e^{-(n-1)  \sum_{j=1}^{n}|u_j| t -\frac{1}{2}\sum_{1\leq j\neq m\leq n} (u_j u_m +|u_j u_m|)t }  \\
        &\qquad \int_{\mathcal{T}_n} \prod_{j=1}^n\left(\frac{\lambda_j(0)}{\lambda_j}\right)^{\frac{|u_j|}{2}} \frac{q^{(1/2+|u_1|,\dots,1/2+|u_n|)}_{2t}(\lambda^{(n-1)}(0),\lambda^{(n-1)})}{q^{(1/2,\dots,1/2)}_{2t}(\lambda^{(n-1)}(0),\lambda^{(n-1)})} \frac{W^{(1/2+|u_1|,\dots,1/2+|u_n|)}(\lambda^{(n-1)}) }{W^{(\frac{1}{2},\dots, \frac{1}{2})}(\lambda^{n-1}) } d\mathbb{P}_{\lambda(t)}(\lambda) ,
    \end{align*}
where $\mathbb{P}_{\lambda(t)}$ is the law of $\lambda(t)$.
    It then follows from \eqref{kernel jacobi simplex}  and dominated convergence that

\[
\lim_{t \to +\infty} \mathbb{E}\left(e^{i\sum_{j=1}^{n} u_j\frac{\theta_j(t)}{t}} \right) =e^{-(n-1)\sum_{j=1}^{n}|u_j|  }.
\]
\end{proof}

\section{Simultaneous Brownian windings on the complex sphere}\label{sec:Brownian-winding-on-spheres}

In this final section, we present an application of our previous results to the study of simultaneous Brownian windings on spheres. Consider the $2n-1$ dimensional sphere $\mathbb{S}^{2n-1} \subset \mathbb{C}^n$ and a  Brownian motion 
\[
X(t)=(X_1(t),\dots,X_n(t)),\quad t\ge0
\]
on it. Assuming that $X_j(0) \neq 0$, $1 \le j \le n$, we can then consider the polar decompositions
\[
X_j(t)=\varrho_j(t) e^{i \eta_j(t)}, \quad 1 \le i \le n,
\]
where $\varrho_j(t)$ and $\eta_j(t)$ are continuous and real-valued processes with $\varrho_j(0)>0$.  Our goal will be to understand the joint distribution of the winding process
\[
(\eta_1(t),\dots,\eta_n(t))
\]
and to study its asymptotics as $t \to +\infty$.  Since our methods yield more general results, we will work with a general class of diffusion processes on $\mathbb{S}^{2n-1}$, which include Brownian motion as a special case. %for the sake of generality. 

Our framework is the following. The group 
\[
\mathbf{U}(1)^n =\left\{ (e^{i\theta_1},\dots, e^{i\theta_n}) \mid \theta_i \in \mathbb{R} \right\}
\]
isometrically acts on $\mathbb{S}^{2n-1}$ as
\begin{align}\label{action diagonal to spehere}
(e^{i\theta_1},\dots, e^{i\theta_n}) \dot (z_1,\dots,z_n)=(e^{i\theta_1} z_1,\dots,e^{i\theta_n}z_n).
\end{align}
This yields a fibration of homogeneous spaces
\begin{align}\label{diagonal fibration}
    \mathbf{U}(1)^n \rightarrow \mathbb{S}^{2n-1} \rightarrow \frac{\mathbf U(n)}{\mathbf U(n-1)\times \mathbf U(1)^n} ,
\end{align}
where the map $\mathbb{S}^{2n-1} \rightarrow \frac{\mathbf U(n)}{\mathbf U(n-1)\times \mathbf U(1)^n}$ is a Riemannian submersion with totally geodesic fibers isometric to the torus $\mathbf{U}(1)^n$. Consider then the generators $\frac{\partial}{\partial \theta_1},\dots,\frac{\partial}{\partial \theta_n}$ of the group action \eqref{action diagonal to spehere}; those are Killing vector fields on $\mathbb{S}^{2n-1}$. Due to the fibration \eqref{diagonal fibration} the standard Riemannian metric on $\mathbb{S}^{2n-1}$ can be orthogonally decomposed as
\[
g_{\mathbb{S}^{2n-1}}=g_{\mathcal{H} }\oplus  g_{\mathcal V} ,
\]
where $\mathcal{V}$ is the sub-bundle spanned by $\frac{\partial}{\partial \theta_1},\dots,\frac{\partial}{\partial \theta_n}$ and $\mathcal{H}$ is  its orthogonal complement. Given
\[
\mu=(\mu_1,\dots,\mu_n)
\]
with $\mu_i>0$, we will consider a new Riemannian metric on $\mathbb{S}^{2n-1}$ given by
\[
\begin{cases}
g_\mu(X,Y)=g(X,Y), \quad X,Y \in \mathcal{H} \\
g_\mu \left(\frac{\partial}{\partial \theta_i},\frac{\partial}{\partial \theta_j}\right)=\frac{\delta_{ij}}{\mu_i \mu_j}, \quad 1\le i,j \le n,
\end{cases}
\]
where $\delta_{ij}$ is the Kronecker symbol which is $1$ if $i=j$ and $0$ otherwise. Of course, $g_\mu=g$ if all the $\mu_i$'s are $1$. We will denote by $\mathbb{S}^{2n-1}_\mu$ the sphere $\mathbb{S}^{2n-1}$ when we want to stress that it is equipped with the Riemannian metric $g_\mu$.

We have the following theorem that allows us to relate the simultaneous windings of Brownian motions on spheres to the geometry of the full flag manifold.

\begin{theorem}\label{Representation BM sphere}
    Let $\mu \in \mathbb{R}_{>0}^n$ and let $(w(t))_{t\geq 0}$ be a Brownian motion on the complex full flag manifold $F_{1,2,\dots ,n-1}(\mathbb C^n)$ with stochastic area process $(\theta(t))_{t\geq 0}$. Let $(\beta(t))_{t \ge 0}$ be a Brownian motion on $\mathbb{R}^n$, which is independent of $(w(t))_{t\geq 0}$. The $\mathbb{S}^{2n-1}$-valued process 
    \begin{align*}
        X_\mu (t):=\begin{pmatrix}\frac{e^{i(\mu_1 \beta_1(t) +\theta_1(t))}}{\sqrt{1+|w_1(t)|^2}}, & \dots, & \frac{e^{i(\mu_n \beta_n(t) +\theta_n(t))}}{\sqrt{1+|w_n(t)|^2}}\end{pmatrix}
    \end{align*}
    is a Brownian motion on $\mathbb S_\mu^{2n-1}$.
\end{theorem}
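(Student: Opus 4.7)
The plan is to verify that $X_\mu$ satisfies the two defining properties of a Brownian motion on $\mathbb{S}^{2n-1}_\mu$: it takes values in $\mathbb{S}^{2n-1}$, and its infinitesimal generator equals $\frac{1}{2}\Delta_{\mathbb{S}^{2n-1}_\mu}$. The first property is immediate from Theorem \ref{Th1}: since $\lambda(t) := (1/(1+|w_j(t)|^2))_{j=1,\dots,n}$ takes values in the simplex $\mathcal{T}_n$, one has $\sum_{j=1}^n |X_{\mu,j}(t)|^2 = \sum_{j=1}^n \lambda_j(t) = 1$.

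For the second property, I would use the Riemannian submersion $\pi_\mu: \mathbb{S}^{2n-1}_\mu \to \mathbb{S}^{2n-1}_\mu/\mathbf{U}(1)^n \cong \mathcal{T}_n$ with totally geodesic torus fibers, together with the same skew-product characterization of Brownian motion that underlies the proof of Theorem \ref{skew product unitary}: it is enough to show that (a) $\pi_\mu(X_\mu(t))$ is a Brownian motion on the base $\mathcal{T}_n$ endowed with the metric inherited from $g_\mathcal{H}$, and (b) $X_\mu$ has the correct vertical winding, namely that conditionally on the base trajectory it performs a Brownian motion on the torus fiber $\mathbf{U}(1)^n$ equipped with the metric $g_\mu|_\mathcal{V}$. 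Part (a) is a direct consequence of Theorem \ref{Th1}: indeed $\pi_\mu(X_\mu(t)) = \lambda(t)$ is the Jacobi diffusion with generator $2\widehat{\mathcal{G}}_{1/2,\dots,1/2}$, which coincides with the Brownian motion on $\mathcal{T}_n$ for the induced metric (a quantity depending only on $g_\mathcal{H}$ and thus independent of $\mu$).

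For part (b), I would work in cylindric coordinates $z_j = \sqrt{\lambda_j}\,e^{i\phi_j}$ on $\mathbb{S}^{2n-1}$, writing $X_{\mu,j}(t) = \sqrt{\lambda_j(t)}\,e^{i\phi_{\mu,j}(t)}$ with $\phi_{\mu,j}(t) = \mu_j\beta_j(t) + \theta_j(t) + \phi_{\mu,j}(0)$, and compute the windings of $X_\mu$ against the connection one-forms of the torus bundle on $\mathbb{S}^{2n-1}$. Analogously to the computation $\int_{X[0,t]} \eta_j = 0$ from the proof of Theorem \ref{skew product unitary}, the horizontal contribution of the flag manifold Brownian motion $w(t)$ is exactly cancelled by the stochastic area $\theta_j(t)$, by the very definition of $\theta$ in Definition \ref{def-theta}. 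What remains is the contribution $\mu_j\beta_j(t)$, which, given that $\beta$ is an independent standard Brownian motion on $\mathbb{R}^n$ and that $g_\mu(\partial_{\theta_i},\partial_{\theta_j}) = \delta_{ij}/(\mu_i\mu_j)$, is precisely a Brownian motion on $(\mathbf{U}(1)^n, g_\mu|_\mathcal{V})$. I expect the main obstacle to be this last cancellation: one needs to identify carefully how the pullback of the sphere's connection form under the cylindric parametrization involves $\lambda_j\,d\phi_j$ and to track the Itô correction terms arising from the nonlinear dependence on $w$, so that the contributions of $(w,\theta)$ and of $\mu_j\beta_j$ decouple correctly. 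Once this cancellation is established, properties (a) and (b) together imply that $X_\mu$ is a Brownian motion on $\mathbb{S}^{2n-1}_\mu$.
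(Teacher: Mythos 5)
Your step (a) is fine, but the decisive step (b) contains a genuine gap: the cancellation you rely on does not happen on the sphere. Write $X_{\mu,j}(t)=\sqrt{\lambda_j(t)}\,e^{i\phi_j(t)}$ with $\phi_j=\theta_j+\mu_j\beta_j$. The connection data of the torus fibration \eqref{diagonal fibration} is intrinsic to $\mathbb{S}^{2n-1}$: in cylindric coordinates $z_j=\sqrt{\lambda_j}e^{i\phi_j}$ the dual forms of the Killing fields are proportional to $\mathrm{Im}(\bar z_j\,dz_j)=\lambda_j\,d\phi_j$, and the orthogonal complement of the torus orbits consists of purely radial directions. Consequently the vertical winding of $X_\mu$ in the $j$-th fiber direction is $\phi_j(t)-\phi_j(0)=\theta_j(t)+\mu_j\beta_j(t)$: nothing cancels $\theta_j$. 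The form $\alpha_j$ that produces the cancellation $\int_{X[0,t]}\eta_j=0$ in the proof of Theorem \ref{skew product unitary} is a one-form in the variable $w_j\in\mathbb{C}^{n-1}$ on the flag manifold; it does not descend through the last-row projection $\mathbf{U}(n)\to\mathbb{S}^{2n-1}$, since the sphere retains only $(\lambda_j,\phi_j)$ and forgets the rest of the column $w_j$, so the analogue of that computation is simply unavailable downstairs. Moreover the resulting fiber motion $\theta+\mu\beta$ is not what your criterion (b) requires: by Theorem \ref{horizontal Laplace} and Corollary \ref{skew product}, conditionally on the radial path one has $d\theta_j\,d\theta_m=dt$ for $j\neq m$ and $d\theta_j\,d\theta_j=\frac{1-\lambda_j(t)}{\lambda_j(t)}\,dt$, so given the base trajectory the angular part is a correlated Gaussian whose covariance depends on $\lambda$ (this is exactly what Theorem \ref{carac area flag} exploits); it is neither independent of the base nor a Brownian motion on the flat torus $(\mathbf{U}(1)^n,g_\mu|_{\mathcal V})$. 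Hence (b), as you formulate it, fails for $X_\mu$, and the proposed route cannot be completed.

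The paper argues differently and avoids any such cancellation on the sphere: using the commutative diagram relating $\mathbf{U}(n)$, the flag manifold, $\mathbb{S}^{2n-1}$ and the base, together with Theorem \ref{skew product unitary}, it identifies the process $\bigl(e^{i\theta_j(t)}/\sqrt{1+|w_j(t)|^2}\bigr)_{1\le j\le n}$ — phases included — as the horizontal Brownian motion of the fibration \eqref{diagonal fibration}, horizontality being transferred from the unitary group, where the connection forms $\eta_j$ do contain the area terms; it then writes $\Delta_{\mathbb{S}^{2n-1}_\mu}=\Delta_{\mathcal H}+\sum_j\mu_j^2\partial^2_{\theta_j}$ and invokes the commutation of the two summands (totally geodesic fibers) to superpose the independent torus noise $\mu_j\beta_j$. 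In other words, in the paper's argument the phases $\theta_j$ belong to the horizontal motion, which is the opposite of what your intrinsic test against the sphere's own connection form yields ($\theta_j$ sitting in the vertical winding); this tension is precisely where your approach breaks down. A repaired proof must either transfer horizontality from $\mathbf{U}(n)$ as the paper does, or bypass the fibration language altogether and verify directly, from Corollary \ref{skew product} and the independence of $\beta$, that the generator of $(\lambda(t),\theta(t)+\mu\beta(t))$ coincides with $\frac12\Delta_{\mathbb{S}^{2n-1}_\mu}$ expressed in the coordinates $(\lambda,\phi)$ — the correlation terms $\partial^2/\partial\theta_j\partial\theta_m$ noted above cannot be wished away and must be accounted for in that identification.
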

\begin{proof}
Consider the following commutative diagram:
\[\begin{tikzcd}
	{\mathbf{U}(n)} && {F_{1,2,\dots ,n-1}(\mathbb C^n)} \\
	\\
	{\mathbb{S}^{2n-1}} && {\frac{\mathbf{U}(n)}{\mathbf{U}(n-1)\times \mathbf{U}(1)^n}}
	\arrow[from=1-1, to=1-3]
	\arrow[from=1-1, to=3-1]
	\arrow[from=1-3, to=3-3]
	\arrow[from=3-1, to=3-3]
\end{tikzcd} \]
where the map $\mathbf{U}(n)\to \mathbb{S}^{2n-1}$ is the last row projection, i.e. a unitary matrix is sent to its last row vector.  From this diagram and Theorem \ref{skew product unitary}, it follows that the horizontal Brownian motion of the fibration \eqref{diagonal fibration} is given by
\begin{align*}
\begin{pmatrix}\frac{e^{i\theta_1(t)}}{\sqrt{1+|w_1(t)|^2}}, & \dots, & \frac{e^{i \theta_n(t)}}{\sqrt{1+|w_n(t)|^2}}\end{pmatrix} .
\end{align*}
Now, the Laplace-Beltrami operator on $\mathbb{S}^{2n-1}_\mu$ is 
    \begin{align*}
        \Delta_{\mathbb{S}^{2n-1}_\mu} =\Delta_{\mathcal{H}} +\sum_{j=1}^n \mu_j^2 \frac{\partial^2}{\partial\theta_j^2 } ,
    \end{align*}
    where $\Delta_{\mathcal{H}}$ denotes here the horizontal Laplacian of the fibration \eqref{diagonal fibration}.
    We then note that $\Delta_{\mathcal{H}}$ and $\sum_{j=1}^n \mu_j^2 \frac{\partial^2}{\partial\theta_j^2 }$ commute since the submersion $\mathbb{S}_\mu^{2n-1} \rightarrow \frac{\mathbf U(n)}{\mathbf U(n-1)\times \mathbf U(1)^n}$ is totally geodesic, see \cite[Theorem 4.1.18]{book}.
    The conclusion follows.
    \end{proof}

We obtain the following corollary.

\begin{corollary}\label{limit simul wind}
    Let $\mu \in \mathbb{R}_{>0}^n$ and let $(X_\mu(t))_{t\geq 0}=(X^1_\mu(t),\dots,X^n_\mu(t))_{t\geq 0}$ be a Brownian motion on $\mathbb{S}^{2n-1}_\mu$ such that $X^j_\mu(0) \neq 0$, $1 \le j \le n$. Let $(\eta^1_\mu(t),\dots,\eta^n_\mu(t))_{t \ge 0}$ be a continuous stochastic process such that
    \[
    X^j_\mu(t)=| X^j_\mu(t) | e^{i\eta^j_\mu(t) }.
    \]
    Then, the following convergence 
    \[
  \frac{1}{t}  (\eta^1_\mu(t),\dots,\eta^n_\mu(t))\to \left( C^1_{n-1},\dots,C^n_{n-1}\right) 
 \]
 holds in distribution when $t \to +\infty$, where $C^1_{n-1},\dots,C^n_{n-1}$ are independent Cauchy random variables with parameter $n-1$.
\end{corollary}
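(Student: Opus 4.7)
The strategy is to reduce the convergence of the simultaneous windings on $\mathbb{S}^{2n-1}_\mu$ to the convergence of the stochastic area vector $\theta(t)/t$ on the full flag manifold, which has already been established in Theorem \ref{limit stochastic area}, by exploiting the explicit skew-product representation of Brownian motion on $\mathbb{S}^{2n-1}_\mu$ provided by Theorem \ref{Representation BM sphere}.

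First, since the law of Brownian motion on the compact Riemannian manifold $\mathbb{S}^{2n-1}_\mu$ is determined by its generator, we may assume (enlarging the probability space if necessary) that $(X_\mu(t))_{t\ge 0}$ is realized as in Theorem \ref{Representation BM sphere}. Thus there exist a Brownian motion $(w(t))_{t\ge 0}$ on $F_{1,2,\dots,n-1}(\mathbb{C}^n)$ with stochastic area process $(\theta(t))_{t\ge 0}$, together with an independent standard Brownian motion $(\beta(t))_{t\ge 0}$ on $\mathbb{R}^n$, such that
\[
X^j_\mu(t) = \frac{e^{i(\mu_j \beta_j(t)+\theta_j(t))}}{\sqrt{1+|w_j(t)|^2}}, \qquad 1\le j \le n.
\]

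Reading off the polar decomposition, the continuous real-valued process $t\mapsto \mu_j\beta_j(t) + \theta_j(t)$ is a continuous lift of $\arg X^j_\mu(t)$. Since $\eta^j_\mu$ is also, by assumption, a continuous lift of the same argument, their difference takes values in $2\pi\mathbb{Z}$; by continuity in $t$ this difference must be constant. Therefore there is a deterministic (or initial) constant $c_j$ such that
\[
\eta^j_\mu(t) = c_j + \mu_j \beta_j(t) + \theta_j(t), \qquad t \ge 0.
\]
Dividing by $t$ yields
\[
\frac{\eta^j_\mu(t)}{t} = \frac{c_j}{t} + \mu_j\,\frac{\beta_j(t)}{t} + \frac{\theta_j(t)}{t}.
\]

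To conclude, I would observe that $c_j/t \to 0$ deterministically, and that $\beta_j(t)/t \to 0$ almost surely by the classical strong law for Brownian motion (or simply because $\beta_j(t)/\sqrt{t}$ is $\mathcal{N}(0,1)$). These estimates hold jointly in $j$, so the vector $(c_j/t + \mu_j\beta_j(t)/t)_{j=1}^n$ converges to zero in probability in $\mathbb{R}^n$. By Theorem \ref{limit stochastic area}, $\theta(t)/t$ converges in distribution to the vector $(C^1_{n-1},\dots,C^n_{n-1})$ of independent Cauchy random variables of parameter $n-1$. The conclusion then follows from Slutsky's theorem in $\mathbb{R}^n$ applied to the sum of these two terms.

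There is no real obstacle here: the only slightly delicate point is the identification of the continuous lift $\eta^j_\mu(t)$ with the skew-product phase $\mu_j\beta_j(t)+\theta_j(t)$ up to an additive constant, which is an elementary consequence of the discreteness of $2\pi\mathbb{Z}$ and continuity. Everything else is a direct application of the two main theorems cited above.
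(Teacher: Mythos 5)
Your proposal is correct and follows essentially the same route as the paper's proof: identify $(\eta^j_\mu(t))_j$ with $(\mu_j\beta_j(t)+\theta_j(t))_j$ via the skew-product representation of Theorem \ref{Representation BM sphere}, note that $\beta(t)/t\to 0$ almost surely, and conclude from Theorem \ref{limit stochastic area}. The only difference is that you spell out the details the paper leaves implicit (the identification of the continuous argument lift up to an additive constant and the final appeal to Slutsky's theorem), which is a welcome but not substantive refinement.
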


\begin{proof}
From the Theorem \ref{Representation BM sphere}, one has in distribution
\[
(\eta^1_\mu(t),\dots,\eta^n_\mu(t))=\left( \mu_1 \beta_1(t) +\theta_1(t), \dots, \mu_n \beta_n(t) +\theta_n(t) \right),
\]
where $\theta(t)$ is the stochastic area process of a Brownian motion on the complex full flag manifold and $\beta(t)$ is an independent Brownian motion. Since $\frac{1}{t} \beta(t)$ almost surely converges to $0$ when $t \to +\infty$, the result follows from Theorem \ref{limit stochastic area}.
\end{proof}

 Interestingly, one can deduce from Corollary \ref{limit simul wind} a limit theorem about some functionals of the Euclidean Brownian motion, and recover the celebrated Spitzer theorem as a corollary.
    Indeed, when $\mu=(1,\dots,1)$, $(X_\mu(t))_{t\geq 0}$ is simply a Brownian motion on the sphere $\mathbb{S}^{2n-1}$ equipped with its standard metric. In that case, the law of $(\eta^1_\mu(t),\dots,\eta^n_\mu(t))$ can also be expressed in terms of a Brownian motion in $\mathbb{C}^n$. Namely, let
    \[
    Z(t)=(Z_1(t),\dots,Z_n(t))
    \]
    be a Brownian motion in $\mathbb{C}^n$ such that $Z_j(0) \neq 0$, $1 \le j \le n$. Then, it is well-known \cite[Example 2.4.1]{book} that the following skew product decomposition holds
    \[
    Z(t)= |Z (t)| X \left( \int_0^t \frac{ds}{|Z(s)|^2} \right) ,
    \]
    where $X$ is a Brownian motion on $\mathbb{S}^{2n-1}$ which is independent from the process $|Z |$. As a consequence
    \[
    \eta \left(\int_0^t \frac{ds}{|Z(s)|^2}\right) =\eta (0)+\zeta \left( t \right) ,
    \]
    where $\zeta$ is the simultaneous winding process of $Z$, i.e
    \[
    \zeta_j(t)=\frac{1}{2i } \int_0^t \frac{\bar{Z}_j(s)dZ_j(s)-Z_j(s) d\bar{Z}_j(s)}{|Z_j(s)|^2} ,
    \]
    and $\eta$ is the simultaneous winding from Corollary \ref{limit simul wind} for $\mu=(1,\dots,1)$. We therefore conclude from Corollary \ref{limit simul wind} that the following convergence
    \begin{align*}
   & \frac{1}{2i\int_0^t \frac{ds}{|Z(s)|^2}}  \left( \int_0^t \frac{\bar{Z}_1(s)dZ_1(s)-Z_1(s) d\bar{Z}_1(s)}{|Z_1(s)|^2},\dots, \int_0^t \frac{\bar{Z}_n(s)dZ_n(s)-Z_n(s) d\bar{Z}_n(s)}{|Z_n(s)|^2}\right) \\
   \to & \left( C^1_{n-1},\dots,C^n_{n-1}\right)
 \end{align*}
 holds in distribution when $t \to +\infty$, where  $C^1_{n-1},\dots,C^n_{n-1}$ are independent Cauchy random variables with parameter $n-1$. On the other hand, from the Birkhoff-Khinchin ergodic theorem one can check that the following convergence holds almost surely, see \cite[Exercise 3.20, Page 430]{MR1725357}:
 \[
 \lim_{t\to +\infty} \frac{1}{\ln t}\int_0^t \frac{ds}{|Z(s)|^2}=\frac{1}{2(n-1)}.
 \]
 Therefore, one obtains that the following convergence
    \begin{align*}
   & \frac{1}{i \ln t }  \left( \int_0^t \frac{\bar{Z}_1(s)dZ_1(s)-Z_1(s) d\bar{Z}_1(s)}{|Z_1(s)|^2},\dots, \int_0^t \frac{\bar{Z}_n(s)dZ_n(s)-Z_n(s) d\bar{Z}_n(s)}{|Z_n(s)|^2}\right) \\
   \to & \left( C^1_{1},\dots,C^n_{1}\right)
 \end{align*}
 holds in distribution when $t \to +\infty$, where  $C^1_{1},\dots,C^n_{1}$ are independent Cauchy random variables with parameter $1$. This recovers Spitzer's theorem \cite[Theorem 4.1, Page 430]{MR1725357}.

To finish, we point out that the distribution of the simultaneous winding numbers of Brownian loops on spheres can, in principle,  be computed explicitly as a generalization of \cite[Proposition 2.9, Theorem 2.11]{baudoin2024lawindexbrownianloops}. Indeed, we have the following lemma which generalizes \cite[Lemma 6.1]{Yor-lacet} to the sphere setting.

\begin{lemma}
    Let $\Lambda=(\Lambda_1,\dots,\Lambda_n)$ be a random variable taking values on the $(n-1)$-simplex $\mathcal{T}_n $ and $\Theta=(\Theta_1,\dots,\Theta_n)$ be a random variable in $\mathbb{R}^n$. Consider the $\mathbb{S}^{2n-1}$ valued random variable
    \[
    X=(\sqrt{\Lambda_1} e^{i\Theta_1},\dots,\sqrt{\Lambda_n} e^{i\Theta_n}).
    \]
    Assume that $\Lambda$ has a density $U$ with respect to the normalized uniform volume measure $\mu_{\mathcal{T}_{n}} $ of $\mathcal{T}_n $ and that the distribution of $\Theta$ conditionally to $\Lambda=\lambda$ is given by $d\mathbb{P}_{\Theta \mid \Lambda=\lambda}=V(\lambda,\theta) d\theta $, where $d\theta$ denotes here the Lebesgue measure on $\mathbb{R}^n$. Finally, assume that $X$ has a density $p$ with respect to the normalized volume measure of $\mathbb{S}^{2n-1}$. Then, the distribution of $\Theta$ conditioned on $X=(\sqrt{\lambda_1} e^{i\theta_1},\dots, \sqrt{\lambda}_n e^{i \theta_n})$ is given by
    \[
    \mathbb{P}\left( \Theta = \theta +2\pi k \mid X =(\sqrt{\lambda_1} e^{i\theta_1},\dots, \sqrt{\lambda}_n e^{i \theta_n}) \right)= C_n\frac{U(\lambda)V( \lambda, \theta +2\pi k)}{p(\sqrt{\lambda_1} e^{i\theta_1},\dots, \sqrt{\lambda}_n e^{i \theta_n}) }, \quad k \in \mathbb{Z}^n
    \]
    for $\lambda \in \mathcal{T}_{n}$ and $\theta \in [0,2\pi)^n$, where $C_n$ is a normalization constant.
\end{lemma}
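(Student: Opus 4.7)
The plan is to combine a change of variables with a Bayes-style disintegration. Consider the map
\[
\phi : \mathcal{T}_n \times [0,2\pi)^n \longrightarrow \mathbb{S}^{2n-1}, \quad (\lambda,\theta) \mapsto \bigl(\sqrt{\lambda_1}e^{i\theta_1},\dots,\sqrt{\lambda_n}e^{i\theta_n}\bigr),
\]
which is a diffeomorphism from the interior of its domain onto a full-measure open subset of $\mathbb{S}^{2n-1}$. A routine computation — polar coordinates in each complex factor followed by the substitution $r_j^2=\lambda_j$, compared with the radial decomposition of Lebesgue measure on $\mathbb{R}^{2n}$ — shows that the pullback under $\phi$ of the normalized volume measure $d\nu_{\mathbb{S}^{2n-1}}$ equals $c_n\,d\mu_{\mathcal{T}_n}(\lambda)\,d\theta$ for a constant $c_n > 0$ depending only on $n$. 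For each $x$ in the image write $(\lambda(x),\theta(x)) := \phi^{-1}(x) \in \mathcal{T}_n \times [0,2\pi)^n$ for the canonical representative.

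Next I would test the identity against arbitrary bounded measurable functions $f:\mathbb{R}^n \to \mathbb{R}$ and $g:\mathbb{S}^{2n-1} \to \mathbb{R}$. By the hypotheses on the joint law of $(\Lambda,\Theta)$, the expectation $\mathbb{E}[f(\Theta)g(X)]$ equals
\[
\int_{\mathcal{T}_n\times\mathbb{R}^n} f(\theta)\,g(\phi(\lambda,\theta))\,U(\lambda)\,V(\lambda,\theta)\,d\mu_{\mathcal{T}_n}(\lambda)\,d\theta.
\]
Splitting $\mathbb{R}^n = \bigsqcup_{k\in\mathbb{Z}^n}([0,2\pi)^n + 2\pi k)$ and exploiting that $\phi$ is $2\pi$-periodic in each $\theta_j$, this equals
\[
\sum_{k\in\mathbb{Z}^n}\int_{\mathcal{T}_n\times[0,2\pi)^n} f(\theta+2\pi k)\,g(\phi(\lambda,\theta))\,U(\lambda)\,V(\lambda,\theta+2\pi k)\,d\mu_{\mathcal{T}_n}(\lambda)\,d\theta.
\]
Pushing the $(\lambda,\theta)$-integral to the sphere via the first step then yields
\[
\mathbb{E}[f(\Theta)g(X)] = \frac{1}{c_n}\int_{\mathbb{S}^{2n-1}} g(x)\sum_{k\in\mathbb{Z}^n} f(\theta(x)+2\pi k)\,U(\lambda(x))\,V(\lambda(x),\theta(x)+2\pi k)\,d\nu_{\mathbb{S}^{2n-1}}(x).
\]

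To finish, by the definition of $p$ and of the conditional law,
\[
\mathbb{E}[f(\Theta)g(X)] = \int_{\mathbb{S}^{2n-1}} g(x)\,\mathbb{E}[f(\Theta)\mid X=x]\,p(x)\,d\nu_{\mathbb{S}^{2n-1}}(x).
\]
Since $g$ is arbitrary, comparing integrands gives $\nu_{\mathbb{S}^{2n-1}}$-almost everywhere
\[
\mathbb{E}[f(\Theta)\mid X=x]\,p(x) = \frac{1}{c_n}\sum_{k\in\mathbb{Z}^n} f(\theta(x)+2\pi k)\,U(\lambda(x))\,V(\lambda(x),\theta(x)+2\pi k),
\]
and since $f$ is arbitrary this identifies the law of $\Theta$ given $X=x$ as the purely atomic measure on $\{\theta(x)+2\pi k : k \in \mathbb{Z}^n\}$ with the weights claimed in the statement, setting $C_n = 1/c_n$. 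The only mildly delicate point is the Jacobian computation in the first step, together with the verification that the locus where $\phi$ fails to be a diffeomorphism (the boundary of the simplex) is negligible on both sides; the remainder is bookkeeping via Fubini and the fundamental-domain decomposition of $\mathbb{R}^n$ by $(2\pi\mathbb{Z})^n$.
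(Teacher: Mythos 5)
Your argument is correct and takes essentially the same route as the paper's proof: both rest on the change of variables $x=(\sqrt{\lambda_1}e^{i\theta_1},\dots,\sqrt{\lambda_n}e^{i\theta_n})$ on $\mathbb{S}^{2n-1}$ (with constant Jacobian against $d\mu_{\mathcal{T}_n}\,d\theta$), the decomposition of $\mathbb{R}^n$ into the fundamental domains $[0,2\pi)^n+2\pi k$, and a comparison of two expressions of an expectation against bounded Borel test functions. The only difference is that you test against $f(\Theta)g(X)$ and read off the conditional law of $\Theta$ given $X$ directly, whereas the paper tests against $f(X)$ alone to first obtain $p(\sqrt{\lambda_1}e^{i\theta_1},\dots,\sqrt{\lambda_n}e^{i\theta_n})=C_n U(\lambda)\sum_{k\in\mathbb{Z}^n}V(\lambda,\theta+2\pi k)$ and then observes the lemma follows; your version just makes that final Bayes-type step explicit.
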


\begin{proof}
The statement easily follows from the formula
\[
p(\sqrt{\lambda_1} e^{i\theta_1},\dots, \sqrt{\lambda}_n e^{i \theta_n})= C_nU(\lambda)  \sum_{k \in \mathbb Z^n} V( \lambda, \theta +2\pi k),
\]
which can be deduced from the following argument. Let $f$ be a bounded Borel function. On one hand, 
\begin{align*}
\mathbb{E}\left( f(X) \right)&=\mathbb{E}\left( f(\sqrt{\Lambda_1} e^{i\Theta_1},\dots,\sqrt{\Lambda_n} e^{i\Theta_n}) \right) \\
 &=\int_{\mathcal{T}_{n}} \mathbb{E}\left( f(\sqrt{\lambda_1} e^{i\Theta_1},\dots,\sqrt{\lambda_n} e^{i\Theta_n})  \mid \Lambda=\lambda \right) U(\lambda) d\mu_{\mathcal{T}_{n}} (\lambda) \\
 &=\int_{\mathbb{R}^n}\int_{\mathcal{T}_{n}}  f(\sqrt{\lambda_1} e^{i\theta_1},\dots, \sqrt{\lambda}_n e^{i \theta_n})  V(\lambda,\theta) U(\lambda) d\mu_{\mathcal{T}_{n}} (\lambda) d\theta \\
 &=\int_{[0,2\pi]^n}\int_{\mathcal{T}_{n}}  f(\sqrt{\lambda_1} e^{i\theta_1},\dots, \sqrt{\lambda}_n e^{i \theta_n}) \left( \sum_{k \in \mathbb{Z}^n} V(\lambda,\theta+2\pi k) \right) U(\lambda) d\mu_{\mathcal{T}_{n}} (\lambda)d\theta ,
\end{align*}
while on the other hand, one has
\begin{align*}
\mathbb{E}\left( f(X) \right)=\int_{\mathbb{S}^{2n-1}} f(x) p(x) d\mu_{\mathbb{S}^{2n-1}} (x).
\end{align*}
Performing in the last integral the change of variable $x=(\sqrt{\lambda_1} e^{i\theta_1},\dots, \sqrt{\lambda}_n e^{i \theta_n})$ with $\lambda \in \mathcal{T}_{n}$  and $\theta \in [0,2\pi)^n$ yields the expected result after a straightforward computation of the Jacobian.
\end{proof}

Combining the above lemma with Theorems \ref{Representation BM sphere}  and \ref{carac area flag}  yields a formula for  $$\mathbb{P}\left(\eta(t)=\eta(0)+2\pi k \mid X(t)=X(0)\right) ,$$ where $(X(t))_{t\geq 0}=(X^1(t),\dots,X^n(t))_{t\geq 0}$ is a Brownian motion on $\mathbb{S}^{2n-1}$ such that $X^j(0) \neq 0$, $1 \le j \le n$ and $(\eta^1(t),\dots,\eta^n(t))_{t \ge 0}$ is a continuous stochastic process such that
    \[
    X^j(t)=| X^j(t) | e^{i\eta^j(t) }
    \]
    with $\eta(0) \in [0,2\pi)^n $. We restrain to explicitly write the formula here, since it involves inverting the conditional Fourier transform of Theorem \ref{carac area flag}, which is difficult to handle.

\bibliographystyle{plain}
\bibliography{reference.bib}
%\noindent\hrulefill  % Horizontal line for separation

\vspace{5pt}
\noindent
\begin{minipage}{\textwidth}
    \small
    \textbf{Fabrice Baudoin:} \\
    Department of Mathematics, Aarhus University \\
    Email: fbaudoin@math.au.dk
\end{minipage}

\vspace{10pt} % Space between authors

\noindent
\begin{minipage}{\textwidth}
    \small
    \textbf{Nizar Demni:} \\
    Department of Mathematics, NYU Abu Dhabi \\
    Email: nd2889@nyu.edu
\end{minipage}

\vspace{10pt}

\noindent
\begin{minipage}{\textwidth}
    \small
    \textbf{Teije Kuijper:} \\
    Department of Mathematics, Aarhus University \\
    Email: t.kuijper@math.au.dk
\end{minipage}

\vspace{10pt}

\noindent
\begin{minipage}{\textwidth}
    \small
    \textbf{Jing Wang:} \\
    Department of Mathematics, Purdue University \\
    Email: jingwang@purdue.edu
\end{minipage}

\end{document}